\documentclass[11pt]{article}
	\usepackage{amsfonts,amssymb,amsthm,amsmath,cite,bbm,textcomp}

	\oddsidemargin 0.5cm \evensidemargin 0.5cm\textwidth 15.6cm \textheight 21cm

	\newcommand{\R}{\mathbb R}  
	\newcommand{\E}{\mathbb E} 
	
	\renewcommand{\S}{\mathbb S}
	\renewcommand{\H}{\mathbb H} 
	\newcommand{\B}{\mathbb B}
	
	\newcommand{\cK}{{\mathcal K}} 
	\newcommand{\cP}{{\mathcal P}}

	\newcommand{\sn}{{\scriptscriptstyle \S^n}}
	\newcommand{\hn}{{\scriptscriptstyle \H^n}}
	\newcommand{\ps}{{\scriptscriptstyle +}}
	\newcommand{\mss}{{\scriptscriptstyle -}}
	\newcommand{\pms}{{\scriptscriptstyle \pm}}
	\newcommand{\ms}{{\scriptscriptstyle -}}
	\newcommand{\rn}{{\R^{n}}}
	
	\newcommand{\Int}{\operatorname{int}}
	\newcommand{\dist}{\operatorname{dist}}
	\newcommand{\bd}{\partial}
	\newcommand{\ldel}{\operatorname{ldel}}
	\newcommand{\ldiv}{\operatorname{ldiv}}
	\newcommand{\vol}{\operatorname{vol}}
	\newcommand{\Vol}{\operatorname{V}_n}

	\newcommand{\Hs}{H_{n-1}^{\scriptscriptstyle\S^{\scriptscriptstyle n}}}
	\newcommand{\Hh}{H_{n-1}^{\scriptscriptstyle\H^{\scriptscriptstyle n}}}
	\newcommand{\eqnref}[1]{(\ref{#1})} 
	
	\newcommand{\gln}{\operatorname{GL}(n)}

	\newtheorem{theorem}{Theorem}[section]
	
	\newtheorem{lemma}[theorem]{Lemma}
	\newtheorem{corollary}[theorem]{Corollary}

\title{Weighted floating bodies and polytopal approximation}
\author{Florian Besau, Monika Ludwig and Elisabeth M.~Werner}
\date{}

\begin{document}
\maketitle
\begin{abstract}
Asymptotic results for weighted floating bodies are established and used to obtain new proofs for the existence of floating areas on the sphere and in hyperbolic space and to establish the existence of floating areas in Hilbert geometries.	Results on weighted best and random approximation and the new approach to floating areas are combined to derive new asymptotic approximation results 	on the sphere, in hyperbolic space and in Hilbert geometries.

\bigskip
\noindent
{\footnotesize 2000 AMS subject classification: Primary 52A38; Secondary 52A27, 52A55, 53C60, 60D05.}
\end{abstract}
\bigskip

Let $K$ be a convex body  (that is, compact convex set) in $\R^n$. For $\delta>0$, the floating body $K_\delta$ of $K$ is obtained by cutting off all caps that have volume less or equal to $\delta$.
Extending results for smooth bodies (cf.\ \cite{Leichtweiss:1986}), 
Sch\"utt and Werner \cite{SW:1990} showed for a general convex body $K$ that
\begin{align}\label{eqn:floatingbody}
	\lim_{\delta\to 0} \big(\!\Vol(K)-\Vol(K_\delta)\big) \delta^{-\frac{2}{n+1}} 
		&= \alpha_n \int_{\bd K} H_{n-1}(K,x)^{\frac{1}{n+1}}\, dx,
\end{align}
where $\alpha_n$ is an explicitly known positive constant (see Section \ref{weight_float}).
Here $\Vol$ is $n$-dimensional volume, $H_{n-1}(K,x)$ is the Gauss-Kronecker curvature at $x\in \bd K$ and integration is with respect  to the $(n-1)$-dimensional Hausdorff measure.
The integral on the right side is the {\em affine surface area} of $K$ 
(cf.\ \cite{Lutwak:1991, LR:1999} and \cite[Section 10.5]{Schneider:2014} for more information).

Affine surface area also determines the asymptotic behavior of random polytopes. Specifically,
choose $m$ points uniformly and independently in $K$ and denote their convex hull by $K_m$.
The \emph{random polytope} $K_m$ is easily seen to converge to $K$ in the sense that
$\E(\Vol(K)-\Vol(K_m))\to 0$ as $m\to\infty$, where  $\E$ denotes expectation.
The asymptotic behavior of $K_m$ has been studied
extensively since the 1960's, starting with the seminal results by
R\'enyi and Sulanke \cite{RS:1963, RS:1964} (cf.\ \cite{Reitzner:2010, Hug:2013}). 
Extending results of B\'ar\'any \cite{Barany:1992}, 
Sch\"utt \cite{Schuett:1994} was able to prove the analog 
to \eqnref{eqn:floatingbody} for the random polytope $K_m$ in a general convex body $K$,
\begin{align}\label{eqn:schuett}
	\lim_{m\to \infty} \E \big(\!\Vol(K) - \Vol(K_m)\big) \,{m^\frac{2}{n+1}}
		&= \beta_n \, \Vol(K)^\frac2{n+1}\,\int_{\bd K}  H_{n-1}(K,x) ^\frac{1}{n+1}\, dx,
\end{align}
where $\beta_n$ is an explicitly known positive constant (see Section \ref{random_polytope}).

\goodbreak
The aim of the paper is to extend (\ref{eqn:floatingbody}) and (\ref{eqn:schuett}) in a simple way
to convex bodies on the sphere, in hyperbolic space and in Hilbert geometries.
The approach is via {\em weighted floating bodies} and {\em weighted approximation} in Euclidean space and will also be applied to random approximation by circumscribed polytopes and to asymptotic best approximation.
On the sphere, the asymptotic behavior is described by the {\em spherical floating area}
recently introduced in \cite{BW:2015} and in hyperbolic space by the {\em hyperbolic floating area}
introduced in  \cite{BW:2016}.
In Hilbert geometries, we obtain floating areas that depend on the choice of volume and we establish a connection to centro-affine surface area.
\goodbreak

In the following section, a theorem for weighted floating bodies is stated and results on weighted approximation are collected. 
The results on polytopal random and best approximation and floating bodies on the sphere,
in hyperbolic space and in Hilbert geometries are established in Sections~\ref{sec:sphere},
\ref{sec:hyperbolic} and \ref{sec:hilbert}. The final section contains the proof for the theorem for weighted floating bodies.

\goodbreak
\section{Weighted floating bodies and polytopal approximation in $\R^n$}\label{sec:weighted}

Let $\cK(\rn)$ denote the set of convex bodies (that is, compact convex sets) in $\rn$ with non-empty interior.
For $K\in\cK(\rn)$ and $\phi,\psi:K\to(0,\infty)$ integrable, define,  for $A\subset \rn$  measurable, the measure $\Phi$ by $\Phi(A)=\int_A \phi$ and the measure $\Psi$ by $\Psi(A)=\int_A \psi$.  If $\int_\rn \phi=1$, then $\Phi$ is a probability measure and we write $\E_\Phi$ for the expectation with respect to $\Phi$.

\goodbreak
\subsection{Weighted floating bodies}\label{weight_float}
For $\delta >0$, the weighted floating body $K^\phi_\delta$ is the intersection of all
closed half-spaces
whose defining hyperplanes $H$ cut off  sets of $\Phi$-measure less than
or equal to $\delta$ from $K$, that is,
\begin{equation}\label{eqn:wfb}
	K^\phi_{\delta}=\bigcap \big\{ H^\ms : \Phi(K \cap H^\ps)\le \delta\big\},
\end{equation}
where $H^\pm$ are the closed half-spaces bounded by the hyperplane $H$.
For $\phi\equiv 1$, we obtain (convex) floating bodies, which were introduced (independently) in \cite{BL:1988, SW:1990} as a generalization of the classical {\em floating bodies} (see \cite[Chapter 10.6]{Schneider:2014} for more information). Weighted floating bodies were introduced in \cite{Werner:2002} and generalizations of   \eqnref{eqn:floatingbody} were established there.

The following result generalizes those results from volume to a general measure $\Psi$.
\begin{theorem}\label{thm:wfb}
	For $K\in\cK(\rn)$ and $\phi, \psi: K\to (0,\infty)$  continuous,
	\begin{align}\label{eqn:limit}
		\lim_{\delta \to 0} \frac{\Psi(K) - \Psi(K^\phi_\delta)}{\delta^\frac{2}{n+1}}
			&= \alpha_n \int_{\bd K}  H_{n-1}(K,x) ^\frac{1}{n+1} \phi(x)^{-\frac2{n+1}} \psi(x) \,dx,
	\end{align}
	where 
	\begin{align}\label{eqn:const2}
		\alpha_n:=\frac{1}{2} \bigg(\frac{n+1}{v_{n-1}}\bigg)^{\frac{2}{n+1}}
	\end{align}
and $v_{n-1}$ is the $(n-1)$-dimensional volume of the $(n-1)$-dimensional unit ball.
\end{theorem}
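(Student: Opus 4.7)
\medskip

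\noindent\textbf{Proof plan.} The strategy is to reduce the statement to the case $\psi\equiv 1$, which is exactly the result of Werner~\cite{Werner:2002}, by exploiting the fact that $K\setminus K^\phi_\delta$ is concentrated in a thin layer near $\bd K$ whose thickness vanishes as $\delta\to 0$. On such a layer the continuous function $\psi$ is nearly constant, so to leading order it can be pulled out of the asymptotic computation and evaluated on $\bd K$.

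I would begin by writing
$$\Psi(K)-\Psi(K^\phi_\delta) = \int_{K\setminus K^\phi_\delta} \psi(y)\,dy$$
and recalling from \cite{Werner:2002} the uniform estimate that there exists a constant $C=C(K,\phi)>0$ with
$$K\setminus K^\phi_\delta \subset \{y\in K: \dist(y,\bd K) \le C\delta^{2/(n+1)}\}$$
for all sufficiently small $\delta$. Since $\psi$ is continuous on the compact set $K$, its modulus of continuity evaluated at $C\delta^{2/(n+1)}$ tends to $0$ with $\delta$. Fix $\varepsilon>0$ and choose a finite Borel partition $U_1,\ldots,U_N$ of $\bd K$ together with representatives $x_i\in U_i$ such that $|\psi(x)-\psi(x_i)|\le \varepsilon$ for every $x\in U_i$.

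Next, let $W_i(\delta)$ be the set of points in $K\setminus K^\phi_\delta$ whose metric projection to $\bd K$ lies in $U_i$. Combining the two estimates above yields $\psi(y)=\psi(x_i)+O(\varepsilon)$ on $W_i(\delta)$ up to an error that vanishes with $\delta$, and
\begin{align*}
\Psi(K)-\Psi(K^\phi_\delta) &= \sum_{i=1}^N \int_{W_i(\delta)}\psi(y)\,dy \\
&= \sum_{i=1}^N \psi(x_i)\,\vol(W_i(\delta)) + O(\varepsilon)\cdot \vol(K\setminus K^\phi_\delta).
\end{align*}
The localized form of Werner's volumetric asymptotic, which is proved patch-by-patch in \cite{Werner:2002}, gives
$$\lim_{\delta\to 0}\delta^{-2/(n+1)}\vol(W_i(\delta)) = \alpha_n\int_{U_i} H_{n-1}(K,x)^{1/(n+1)}\phi(x)^{-2/(n+1)}\,dx.$$
Summing over $i$, using the global $\psi\equiv 1$ case to control the $O(\varepsilon)$ contribution, and letting $\varepsilon\to 0$ then yields \eqnref{eqn:limit}.

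The main obstacle is not the handling of $\psi$---which reduces to a uniform-continuity argument once the layer thickness is under control---but rather ensuring that Werner's volumetric asymptotic genuinely localizes to arbitrary Borel pieces of $\bd K$, and that boundary points where the generalized Gauss--Kronecker curvature degenerates or vanishes are handled uniformly enough to justify passing the limit through the finite sum. This is where the density-style estimates of \cite{SW:1990, Werner:2002} do the real work; with that input in place, the insertion of the continuous weight $\psi$ is straightforward.
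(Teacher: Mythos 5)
Your high-level idea---exploit the fact that $K\setminus K^\phi_\delta$ is a vanishingly thin collar near $\bd K$, so that the continuous weight $\psi$ is almost constant there and can be inserted by uniform continuity---is sound in spirit, but the proposal has a genuine gap that you yourself flag at the end and then wave away. The whole argument hinges on the claim
\[
\lim_{\delta\to 0}\delta^{-2/(n+1)}\vol\big(W_i(\delta)\big)
= \alpha_n\int_{U_i} H_{n-1}(K,x)^{1/(n+1)}\phi(x)^{-2/(n+1)}\,dx
\]
for arbitrary Borel $U_i\subset\bd K$, which you attribute to a ``localized form of Werner's volumetric asymptotic.'' No such localized statement is available off the shelf: Werner's result (and Sch\"utt--Werner's) is the \emph{global} limit $\lim_\delta \delta^{-2/(n+1)}\big(\Vol(K)-\Vol(K^\phi_\delta)\big)$. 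To localize it you must pass to a radial (cone) decomposition of $K\setminus K^\phi_\delta$, produce an integrable dominating function on $\bd K$ (this is the $r_K^{-(n-1)/(n+1)}$ bound from \cite{SW:1990}), and compute the a.e.\ pointwise limit of the radial integrand---exactly the content of Lemma~\ref{lem:coneformula}, Lemma~\ref{lem:asym2} and Lemma~\ref{lem:lim2} in the paper. Once you have set up that machinery, the weight $\psi$ rides along for free inside the radial integral $\int_{\|x^\phi_\delta\|}^{\|x\|} t^{n-1}\psi(tx/\|x\|)\,dt$ and in the pointwise limit; there is no need for the separate partition-and-uniform-continuity layer. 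In other words, the reduction you propose is not simpler than proving the weighted statement directly, because the ``known input'' you appeal to is precisely what has to be proved.

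A smaller but real error: the containment $K\setminus K^\phi_\delta\subset\{y\in K:\dist(y,\bd K)\le C\delta^{2/(n+1)}\}$ fails for general convex bodies. Near a vertex of a polytope, for example, the floating body recedes at the rate $\delta^{1/n}$, which exceeds $\delta^{2/(n+1)}$ for $n\ge 2$. What is true, and suffices for your uniform-continuity step, is only that the collar thickness tends to $0$ as $\delta\to 0$ (since $K^\phi_\delta\to K$ in Hausdorff distance); you should not assert a power-law rate. You would also need to replace ``metric projection to $\bd K$'' (not single-valued from inside $K$) by the radial projection from a fixed interior point, which is what the cone formula uses.

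So: correct intuition, but the hard step (localization plus domination plus pointwise limit along rays) is exactly what Section~\ref{sec:proof} of the paper supplies, and you have not sketched how to obtain it independently.
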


\noindent
The proof is given in Section \ref{sec:proof}.

\subsection{Random polytopes}\label{random_polytope}

For $K\in\cK(\R^n)$, let  $\phi: K\to (0,\infty)$ be a probability density and $K^\Phi_m$ the convex hull of $m$ independent random points chosen according to $\Phi$. The following generalization of (\ref{eqn:schuett}) was established by B\"or\"oczky, Fodor, and Hug \cite[Theorem~3.1]{BFH:2010}. 

\begin{theorem}[\! \cite{BFH:2010}]\label{thm:wr}
	Let $K\in\cK(\rn)$ and $ \psi: K\to(0,\infty)$ be continuous.
	If $\phi: K\to (0,\infty)$ is a continuous probability density and the random polytope $K^\Phi_m$ is the convex hull of $m$ independent random points chosen according to $\Phi$, then
	\begin{align}\label{eqn:randomapprox}
		\lim_{m\to \infty} \E_{\Phi}\big(\Psi(K) - \Psi(K^\Phi_m)\big) \,{m^\frac{2}{n+1}}
			&= \beta_n \, \int_{\bd K}  H_{n-1}(K,x) ^\frac{1}{n+1} \phi(x)^{-\frac2{n+1}} \psi(x) \, dx,
	\end{align}
	where 
	\begin{align}\label{eqn:const}
		\beta_n 
			:= \frac{(n^2+n+2)(n^2+1)}{2(n+3)\cdot (n+1)!}\, \Gamma\bigg(\frac{n^2+1}{n+1}\bigg)\,\bigg(\frac{n+1}{v_{n-1}}\bigg)^{\frac{2}{n+1}}.
	\end{align}
\end{theorem}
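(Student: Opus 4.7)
My plan is to deduce Theorem~\ref{thm:wr} from Theorem~\ref{thm:wfb} by transferring the floating-body asymptotics to the random setting via an economic cap-covering argument, in the spirit of Sch\"utt's proof of \eqnref{eqn:schuett}. The starting point is the Fubini identity
\begin{equation*}
	\E_\Phi\bigl(\Psi(K) - \Psi(K^\Phi_m)\bigr) = \int_K \P(x \notin K^\Phi_m)\,d\Psi(x),
\end{equation*}
together with the minimal-cap function $v_\phi(x) := \inf\{\Phi(K\cap H^\ps) : x\in H^\ps\}$. Since $K^\phi_\delta = \{v_\phi > \delta\}$, the distribution of $v_\phi$ under $\Psi$ is precisely the function $t \mapsto \Psi(K)-\Psi(K^\phi_t)$, whose leading-order behavior as $t \to 0$ is furnished by Theorem~\ref{thm:wfb}.

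The core probabilistic step is the pointwise asymptotic
\begin{equation*}
	\P(x \notin K^\Phi_m) \;\sim\; Q_n\bigl(m\, v_\phi(x)\bigr)\,e^{-m\, v_\phi(x)}
\end{equation*}
for $x$ in a suitable boundary layer, where $Q_n$ is a polynomial of degree $n-1$ reflecting the (at least) $n$ witness points required to separate $x$ from $K^\Phi_m$. A matching lower bound follows by conditioning on the minimal separating cap through $x$. For the upper bound I would apply the weighted B\'ar\'any--Larman economic cap covering to dominate $\{x \notin K^\Phi_m\}$ by a bounded-multiplicity union of events ``all samples lie outside cap $C_i$'', each of probability at most $e^{-m\Phi(C_i)}$, and combine this with a Wendel-type combinatorial correction that produces the polynomial prefactor $Q_n$.

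Inserting these estimates into the Fubini identity and applying the layer-cake formula
\begin{equation*}
	\int_K f\bigl(v_\phi(x)\bigr)\,d\Psi(x) = \int_0^\infty f(t)\,d\bigl(\Psi(K)-\Psi(K^\phi_t)\bigr)
\end{equation*}
with $f(t) = Q_n(mt)\,e^{-mt}$, followed by the substitution $t = s/m$, reduces the limit to a one-dimensional integral against the asymptotic derivative of $t\mapsto \Psi(K)-\Psi(K^\phi_t)$ at $t = 0$ supplied by Theorem~\ref{thm:wfb}. The resulting scalar integral $\int_0^\infty s^{\frac{2}{n+1}-1}\,Q_n(s)\,e^{-s}\,ds$ evaluates to a combination of Gamma values which, multiplied by $\alpha_n$, assembles into the constant $\beta_n$ of \eqnref{eqn:const}.

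The main obstacle is passing from two-sided \emph{pointwise} probability bounds to a genuine integral \emph{limit}: the error terms in the cap-covering estimate must be controlled uniformly over the relevant boundary layer so that their $\Psi$-integral is of smaller order than $m^{-2/(n+1)}$. This demands careful use of the almost-everywhere twice differentiability of $\bd K$ (Alexandrov's theorem) and of the $\Psi$-integrability of $H_{n-1}(K,\cdot)^{1/(n+1)}\phi^{-2/(n+1)}$, together with a separate argument showing that the contribution of the region $\{v_\phi \gg (\log m)/m\}$ is exponentially small in $m$ and hence negligible.
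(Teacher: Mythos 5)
The statement you are asked to prove is not actually proved in this paper: Theorem~\ref{thm:wr} is cited verbatim from B\"or\"oczky, Fodor and Hug \cite{BFH:2010}, and the authors of the present paper use it as a black box. So there is no ``paper's proof'' to compare against; what I can do is assess your sketch on its own terms.

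Your high-level plan -- Fubini's $\E_\Phi(\Psi(K)-\Psi(K^\Phi_m)) = \int_K \P(x\notin K^\Phi_m)\,d\Psi(x)$, an inner/outer split driven by cap covering, a local parabolic analysis near normal boundary points, and a final layer-cake/Gamma-function computation -- is the right family of ideas, in the tradition of B\'ar\'any--Larman, B\'ar\'any and Sch\"utt. But the central step is asserted without justification and, as stated, does not appear to be true. You claim a pointwise asymptotic $\P(x\notin K^\Phi_m) \sim Q_n(mv_\phi(x))e^{-mv_\phi(x)}$ with a universal polynomial $Q_n$ of degree $n-1$ ``reflecting the $n$ witness points'' and ``a Wendel-type combinatorial correction.'' Two problems. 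First, $\P(x\notin K_m)$ is not a function of $v_\phi(x)$ alone: it depends on the entire family of caps through $x$, and while near a normal boundary point with positive curvature the rescaled caps become affinely paraboloidal, extracting a limit $F(mv_\phi(x))$ requires precisely the kind of local normal-form and Blaschke--Petkantschin style analysis that is the hard content of \cite{Schuett:1994} and \cite{BFH:2010} -- it does not follow from the economic cap covering, which gives only two-sided \emph{order} bounds with non-matching constants. Second, the arithmetic does not close. If one inserts $\P(x\notin K_m)\approx Q_n(mv_\phi(x))e^{-mv_\phi(x)}$ into the layer-cake identity and uses Theorem~\ref{thm:wfb} for $\Psi(K)-\Psi(K^\phi_t)\sim\alpha_n C\, t^{2/(n+1)}$, the resulting limit is
\begin{equation*}
\alpha_n\,\frac{2}{n+1}\,\Big(\int_0^\infty Q_n(s)\,e^{-s}\,s^{\frac{2}{n+1}-1}\,ds\Big)\,\int_{\bd K}H_{n-1}(K,x)^{\frac{1}{n+1}}\phi(x)^{-\frac{2}{n+1}}\psi(x)\,dx,
\end{equation*}
so matching $\beta_n$ forces
\begin{equation*}
\frac{1}{n+1}\int_0^\infty Q_n(s)\,e^{-s}\,s^{\frac{2}{n+1}-1}\,ds = \frac{(n^2+n+2)(n^2+1)}{2(n+3)\,(n+1)!}\,\Gamma\!\Big(\frac{n^2+1}{n+1}\Big).
\end{equation*}
Taking $Q_n(s)=cs^{n-1}$ this requires $c = \frac{(n^2+n+2)(n^2+1)}{2(n+3)\,n!}$, which equals the Wendel leading coefficient $\frac{1}{(n-1)!}$ only for $n=1$. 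For $n\ge 2$ the Wendel-type polynomial you propose gives the wrong constant. So the prefactor $Q_n$ cannot be obtained by the combinatorial mechanism you invoke, and the derivation does not reduce to Theorem~\ref{thm:wfb} via a pointwise probability formula. The precise local computation carried out in \cite{BFH:2010} is genuinely needed here; your sketch leaves it open.
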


Efron showed that from the expected volume of a random polytope, the expected number of vertices $f_0(K_m)$ can be easily obtained. The same argument applies here and
\begin{align*}
	\E_{\Phi} f_0(K^\Phi_m) = m \big(1-\E_\Phi \Phi(K_{m-1}^\Phi)\big),
\end{align*}
(cf.\ \cite{Hug:2013}).
B\"or\"oczky, Fodor, and Hug \cite[Corollary 3.2]{BFH:2010} deduced the following result.

\begin{corollary}[\!\! \cite{BFH:2010}]\label{cor:wr}
	Let $K\in\cK(\rn)$.
	If $\phi: K\to (0,\infty)$ is a continuous probability density  and the random polytope $K^\Phi_m$ is the convex hull of $m$ independent random points chosen in $K$ according to the probability measure $\Phi$, then
	\begin{align*}
		\lim_{m\to \infty} \E_{\Phi}f_0(K^\Phi_m) \,m^{-\frac{n-1}{n+1}}
		= \beta_n \, \int_{\bd K}  H_{n-1}(K,x)^\frac{1}{n+1} \phi(x)^{\frac{n-1}{n+1}}\, dx,
	\end{align*}
	where $\beta_n$ is the constant defined in \eqnref{eqn:const}.
\end{corollary}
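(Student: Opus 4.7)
The proof is essentially a direct combination of Efron's identity with Theorem~\ref{thm:wr} applied in the special case $\psi=\phi$. The plan is as follows.

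First, I would rewrite Efron's identity as
\begin{align*}
\E_{\Phi} f_0(K^\Phi_m) \,m^{-\frac{n-1}{n+1}} = m^{\frac{2}{n+1}} \big(1-\E_\Phi \Phi(K_{m-1}^\Phi)\big).
\end{align*}
Since $\phi$ is a probability density we have $\Phi(K)=1$, so the right-hand side equals $m^{\frac{2}{n+1}}\,\E_\Phi(\Phi(K)-\Phi(K_{m-1}^\Phi))$. The elementary estimate $m^{\frac{2}{n+1}}/(m-1)^{\frac{2}{n+1}}\to 1$ then reduces the problem to determining
\begin{align*}
\lim_{m\to\infty} (m-1)^{\frac{2}{n+1}}\,\E_\Phi\big(\Phi(K)-\Phi(K_{m-1}^\Phi)\big).
\end{align*}

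Next, I would apply Theorem~\ref{thm:wr} with the choice $\psi:=\phi$, so that $\Psi=\Phi$. The theorem yields
\begin{align*}
\lim_{m\to\infty} \E_{\Phi}\big(\Phi(K) - \Phi(K^\Phi_m)\big)\, m^\frac{2}{n+1}
= \beta_n \int_{\bd K} H_{n-1}(K,x)^\frac{1}{n+1}\, \phi(x)^{-\frac{2}{n+1}}\,\phi(x)\,dx,
\end{align*}
and using $\phi^{-\frac{2}{n+1}}\cdot\phi = \phi^{\frac{n-1}{n+1}}$ the integrand takes exactly the required form. Substituting $m-1$ for $m$ (a harmless shift in the index, since the limit exists) and combining with the first step gives the claim.

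There is no substantive obstacle here: the only things to keep track of are (i) the bookkeeping that $\Phi(K)=1$ because $\phi$ is a probability density, which is what converts the volume-type asymptotic of Theorem~\ref{thm:wr} into a statement about $1-\E_\Phi\Phi(K_{m-1}^\Phi)$, and (ii) the algebraic simplification of the exponent of $\phi$ in the integrand when $\psi=\phi$. The $m\mapsto m-1$ shift and the ratio $m^{2/(n+1)}/(m-1)^{2/(n+1)}\to 1$ are immediate.
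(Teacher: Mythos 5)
Your proof is correct and matches the route the paper indicates: Efron's identity $\E_{\Phi} f_0(K^\Phi_m) = m\big(1-\E_\Phi \Phi(K_{m-1}^\Phi)\big)$, the observation that $\Phi(K)=1$, the index shift $m\mapsto m-1$ with $m^{2/(n+1)}/(m-1)^{2/(n+1)}\to 1$, and Theorem~\ref{thm:wr} with $\psi=\phi$ so that $\phi^{-2/(n+1)}\phi=\phi^{(n-1)/(n+1)}$. The paper states this corollary as a quoted result of B\"or\"oczky, Fodor, and Hug and sketches exactly this Efron-identity derivation rather than giving a full proof, so your write-up supplies the same argument with the bookkeeping made explicit.
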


\goodbreak

\subsection{Random polyhedral sets}
Another model for random polytopes, that was also suggested by R\'enyi and Sulanke \cite{RS:1968}
and that can be considered as dual to the above, is the following:
Given a convex body $K$ in $\R^n$, choose $m$ random closed half-spaces that contain $K$ in a way that is described below
and denote their intersection by $K^m$. 
The \emph{random polyhedral set} $K^m$ may be unbounded and therefore one usually considers $K^m$
intersected with a bounded neighborhood of $K$.
The classical choice is the parallel body $K+\B^n$ of $K$, where $\B^n$ is the closed Euclidean unit ball,
that is, $K+\B^n$ is the set of all points of distance at most $1$ from $K$.

To describe our choice of random half-spaces, we first consider the set $\mathcal{H}$ of all closed half-spaces in $\R^n$. We parametrize  closed half-spaces $H^-(u,t)$ by its normal $u\in\S^{n-1}$ and the distance $t$ from the origin, i.e.,
\begin{align*}
	H^-(u,t) := \{ x\in\R^n : x\cdot u \leq t\}.
\end{align*}
The \emph{support function} $h_K$ of $K$ is defined, for $u\in\R^n$, by $h_K(u) = \max \{u\cdot x : x\in K\}$.
For $u\in\S^{n-1}$, the support function measures the signed distance between the origin and a hyperplane with outer normal $u$ that touches $K$ and the width of $K$ in direction $u$ is given by $h_K(u)+h_K(-u)$. The average width $W(K)$,  also known as \emph{mean width} of $K$,  is,
\begin{align}\label{mean_width}
	W(K) = \frac{1}{n v_n} \int_{\S^{n-1}} \big(h_K(u)+h_K(-u)\big)\, du
		 = \frac{2}{n v_n} \int_{\S^{n-1}} h_K(u)\, du.
\end{align}
On $\mathcal{H}$, there is a uniquely determined rigid motion invariant Borel measure $\mu$  such that 
\begin{align*}
	\mu\big(\{H^-\in \mathcal{H} : 0 < V_n(K\cap H^-)/V_n(K) < 1 \}\big) = W(K).
\end{align*}
For a Borel subset $A$ of $\mathcal{H}$, it is defined by
\begin{align*}
	\mu(A) = \frac{1}{n v_n} \int_{\S^{n-1}} \int_{\R} \mathbf{1}\big[H^-(u,t)\in A\big] \, dt \, du,
\end{align*}
where $\mathbf{1}[P]$ is the indicator function of the proposition $P$, that is, $\mathbf{1}[P]=1$ if $P$ holds and $\mathbf{1}[P]=0$ otherwise. For $K\in \cK(\rn)$, we consider the set of all half-spaces that contain $K$ and whose boundary hyperplanes meet $K+\B^n$, i.e.,
\begin{align*}
	\mathcal{H}_K=\big\{H^-(u,t) : u\in\S^{n-1}, h_K(u)\leq t \leq h_K(u)+1\big\}.
\end{align*}
This yields $\mu(\mathcal{H}_K) = 1$ and therefore the restriction $\mu_K$ of $\mu$ to $\mathcal{H}_K$ is a probability measure. Write $\E_{\mu_K}$ for the expectation with respect to $\mu_K$.
B\"or\"oczky, Fodor, and Hug \cite{BFH:2010} obtained the following result, which can be seen as dual to Theorem \ref{thm:wr} and Corollary \ref{cor:wr}.
\begin{theorem}[\!\! \cite{BFH:2010}]\label{thm:wdr}
	Let $K\in\cK(\rn)$. If the random polyhedral set $K^m$ is the intersection of $m$ independent random half-spaces chosen from $\mathcal{H}_K$ according to $\mu_K$, then
	\begin{align*}
		\lim_{m\to \infty} \E_{\mu_K}\Big(W\big(K^m\cap (K+\B^n)\big)-W(K)\Big)\, m^{\frac{2}{n+1}} 
			&= 2 \beta_n\, (nv_n)^{-\frac{n-1}{n+1}}  \int_{\bd K} H_{n-1}(K,x)^{\frac{n}{n+1}}\, dx,\\
	\intertext{and}
		\lim_{m\to\infty} \E_{\mu_K} f_{n-1}(K^m)\, m^{-\frac{n-1}{n+1}} 
			&= \beta_n\, (nv_n)^{-\frac{n-1}{n+1}}  \int_{\bd K} H_{n-1}(K,x)^{\frac{n}{n+1}}\, dx,
	\end{align*}
	where $f_{n-1}(K^m)$ is the number of facets of $K^m$ and  $\beta_n$ is the constant from \eqnref{eqn:const}.
\end{theorem}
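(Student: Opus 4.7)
My plan is to dualize via polarity and reduce Theorem \ref{thm:wdr} to the random-polytope results of the preceding subsection. After translating so that $0\in\Int K$, let $K^\ast$ denote the polar body of $K$. The map $\Delta\colon H^-(u,t)\mapsto y:=u/t$ is a bijection between the set of closed half-spaces containing $K$ and $K^\ast$, and identifies $\mathcal{H}_K$ with a one-sided neighborhood $N\subset K^\ast$ of $\bd K^\ast$. Whenever $K^m$ is bounded and contains $0$ in its interior (which occurs almost surely for $m$ large), the polarity identity $K^m=(\operatorname{conv}\{y_1,\ldots,y_m\})^\ast$ holds for $y_i:=\Delta(H_i^-)$; hence the facets of $K^m$ are in one-to-one correspondence with the vertices of $Q_m:=\operatorname{conv}\{y_1,\ldots,y_m\}$, so $f_{n-1}(K^m)=f_0(Q_m)$ almost surely.

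A Jacobian computation for $(u,t)\mapsto u/t$ in polar coordinates on $\R^n\setminus\{0\}$ shows that the pushforward $\Delta_\ast\mu_K$ has density $\phi(y)=(nv_n)^{-1}|y|^{-(n+1)}$ on $N$; on $\bd K^\ast$ this restricts to $(nv_n)^{-1}h_K(u)^{n+1}$, where $u\in\S^{n-1}$ is the outer normal to $K$ at the point of $\bd K$ corresponding to $y$ under the polar Gauss map. Since the vertices of $Q_m$ asymptotically accumulate only on $\bd K^\ast$, a localized version of Corollary~\ref{cor:wr} applied to $(K^\ast,\phi)$ yields
\begin{equation*}
\lim_{m\to\infty}\E f_0(Q_m)\,m^{-\frac{n-1}{n+1}}=\beta_n\int_{\bd K^\ast}H_{n-1}(K^\ast,y)^{\frac{1}{n+1}}\phi(y)^{\frac{n-1}{n+1}}\,dy.
\end{equation*}
Transporting this integral to $\bd K$ via the polar Gauss map $x\mapsto\nu_K(x)/h_K(\nu_K(x))$, together with the classical polarity identity for the Gauss-Kronecker curvature, all powers of $h_K(u)$ and $|y|$ cancel and one is left with precisely $\beta_n(nv_n)^{-(n-1)/(n+1)}\int_{\bd K}H_{n-1}(K,x)^{n/(n+1)}\,dx$, giving the facet-count asymptotic.

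For the mean-width asymptotic I would combine \eqref{mean_width} with the layer-cake identity $\E[h_{K^m\cap L}(u)-h_K(u)]=\int_0^1\P(h_{K^m\cap L}(u)>h_K(u)+s)\,ds$, where $L:=K+\B^n$. The event $\{h_{K^m\cap L}(u)>h_K(u)+s\}$ is the event that the cap $L\cap\{x\cdot u>h_K(u)+s\}$ meets $K^m$, and via the polarity $\Delta$ this reduces to a local cap-covering question for $\phi$-distributed points in $N$ near the point of $\bd K^\ast$ dual to $u$. A local paraboloid approximation at $x_K(u)\in\bd K$, analogous to the one underlying the proof of Theorem \ref{thm:wr}, produces the same curvature integral as above, with the extra prefactor $2/(nv_n)$ from \eqref{mean_width} yielding the stated constant $2\beta_n(nv_n)^{-(n-1)/(n+1)}$. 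The main obstacle I anticipate is the curvature bookkeeping in the change of variables from $\bd K^\ast$ to $\bd K$: the polarity identity for $H_{n-1}$ involves several compensating powers of $h_K$ and must be applied with the correct convention for the Jacobian of the polar Gauss map in order for the exponents to conspire correctly.
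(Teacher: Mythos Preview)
The paper does not prove Theorem~\ref{thm:wdr}; it is quoted from \cite{BFH:2010} as a known result, with no argument given. So there is no ``paper's own proof'' to compare against. Your duality-via-polarity strategy is, however, essentially the route taken in \cite{BFH:2010} itself --- the present paper even alludes to this after Theorem~\ref{thm:dual}, remarking that ``in the Euclidean setting a similar result was obtained in \cite[Prop.~5.1]{BFH:2010}.''

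That said, as a self-contained proof your proposal has two soft spots. First, the pushforward density $\phi(y)=(nv_n)^{-1}|y|^{-(n+1)}$ lives only on the annular region $N\subset K^\ast$, not on all of $K^\ast$, so Corollary~\ref{cor:wr} (which requires a continuous probability density on the whole body) does not apply as stated. You would need either to extend $\phi$ continuously into the interior and argue that the extension does not affect the vertex asymptotics, or to invoke the more general formulation in \cite{BFH:2010}, which is set up from the start for densities supported on a one-sided boundary neighborhood. Second, your plan for the mean-width limit is considerably sketchier than the facet-count part: the layer-cake plus cap-event outline is plausible, but ``a local paraboloid approximation \dots\ produces the same curvature integral'' is precisely the hard analytic step, and you have not indicated how to carry it out or how to justify the dominated-convergence step over $\S^{n-1}$. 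The curvature bookkeeping you flag as the main obstacle is real but routine once the polarity identities are written down; the genuine work is in the two points above.
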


\goodbreak
\subsection{Weighted best approximation}
Problems of asymptotic best approximation have been extensively studied since the 1940's (cf.\ \cite{Gruber:1993a}). We restrict our attention to two problems and just remark that further notions of distance and approximation by inscribed and circumscribed polytopes with a given number of faces have also been studied  (cf.\ \cite{Gruber:1993a}).
For $K,P\subset \rn$, write $K\triangle P$ for the symmetric difference of $K$ and $P$. Set 
\begin{align*}
	\dist_\Psi\!\big(K,\cP_m\big)
		&=\inf\big\{\Psi(K\triangle P): P  \text{ polytope with at most $m$ vertices}\big\}\\
\intertext{and}
	\dist_\Psi\!\big(K,\cP_{(m)}\big)
		&=\inf\big\{\Psi(K\triangle P): P  \text{ polytope with at most $m$ facets}\big\}.
\end{align*}
Extending results by L. Fejes T\'oth \cite{FejesToth:1948} and Gruber \cite{Gruber:1993b}, the following asymptotic result was established in \cite{Ludwig:1998} for convex bodies with positive curvature and in \cite{Boroczky:2000} the curvature condition was dropped.

\begin{theorem}[\!\!\cite{Ludwig:1998, Boroczky:2000}]\label{thm:wb}
	For  $K\in\cK(\rn)$ with $C^2$ boundary and $\psi:K\to(0,\infty)$ continuous,
	\begin{align}\label{eqn:bestapprox}
		\lim_{m\to \infty} \dist_\Psi\!\big(K, \cP_m\big) \,{m^\frac{2}{n-1}}
		&=\frac12 \ldel_{n-1} \bigg(\int_{\bd K}  H_{n-1}(K,x) ^\frac{1}{n+1}  \psi(x) ^{\frac{n-1}{n+1}} \, dx\bigg)^{\frac{n+1}{n-1}},\\
	\intertext{and}\notag
		\lim_{m\to \infty} \dist_\Psi\!\big(K, \cP_{(m)}\big) \,{m^\frac{2}{n-1}}
		&=\frac12 \ldiv_{n-1} \bigg(\int_{\bd K}  H_{n-1}(K,x) ^\frac{1}{n+1}  \psi(x) ^{\frac{n-1}{n+1}} \, dx\bigg)^{\frac{n+1}{n-1}},
	\end{align}
	where $\ldel_{n-1}$ and $\ldiv_{n-1}$ are positive constants.
\end{theorem}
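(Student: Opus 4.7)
I would follow the localization strategy of Gruber \cite{Gruber:1993b} adapted to the weighted measure $\Psi$, with continuity of $\psi$ providing the reduction to the unweighted case on each patch. Since $\bd K$ is $C^2$ and compact, cover $\bd K$ by finitely many small coordinate patches $\Gamma_1,\dots,\Gamma_N$, each the graph of a convex $C^2$ function $f_\alpha : U_\alpha \to \R$ over $U_\alpha\subset\R^{n-1}$, chosen small enough that $\psi \approx \psi_\alpha := \psi(x_\alpha)$ throughout a tubular neighborhood of $\Gamma_\alpha$, by uniform continuity of $\psi$ on the compact set $K$. In such coordinates $H_{n-1}(K,\cdot)$ is controlled by $\det D^2 f_\alpha$, and the $\Psi$-measure of any thin layer near $\Gamma_\alpha$ equals $\psi_\alpha$ times its Euclidean volume, up to a factor $1+o(1)$.

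Next I would estimate patchwise. For an approximating polytope $P$ that commits $m_\alpha$ vertices (respectively facets) to $\Gamma_\alpha$, with $\sum_\alpha m_\alpha = m$, write $V_\alpha$ for the contribution of the layer near $\Gamma_\alpha$ to $\Vol(K\triangle P)$. The unweighted asymptotic of \cite{Gruber:1993b}, refined in \cite{Ludwig:1998, Boroczky:2000}, provides matching local upper and lower bounds
\[
V_\alpha \,\sim\, \tfrac12\ldel_{n-1}\, m_\alpha^{-2/(n-1)}\, A_\alpha^{(n+1)/(n-1)},\qquad A_\alpha := \int_{\Gamma_\alpha} H_{n-1}(K,x)^{1/(n+1)}\,dx,
\]
with $\ldiv_{n-1}$ in place of $\ldel_{n-1}$ in the $\cP_{(m)}$ version, which arises from Dirichlet--Voronoi tilings of $\R^{n-1}$ rather than Delaunay ones. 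Weighting by $\psi_\alpha$ and summing, the total $\Psi$-error is asymptotic to $\tfrac12\ldel_{n-1}\sum_\alpha \psi_\alpha\, m_\alpha^{-2/(n-1)}\,A_\alpha^{(n+1)/(n-1)}$.

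Finally I would optimize over the allocation $(m_\alpha)$ subject to $\sum_\alpha m_\alpha=m$: a Lagrange multiplier calculation yields the minimizer $m_\alpha^\ast \propto \psi_\alpha^{(n-1)/(n+1)} A_\alpha$ and minimum value
\[
\tfrac12 \ldel_{n-1}\, m^{-2/(n-1)} \bigg(\sum_\alpha \psi_\alpha^{(n-1)/(n+1)}\,A_\alpha\bigg)^{\!(n+1)/(n-1)}.
\]
Letting the patch diameters tend to zero, the sum converges to the Riemann integral $\int_{\bd K} H_{n-1}(K,x)^{1/(n+1)}\psi(x)^{(n-1)/(n+1)}\,dx$, producing the claim.

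The main obstacle is making the patchwise asymptotic rigorous for arbitrary approximating polytopes — not just asymptotically optimal ones — and gluing patchwise-optimal constructions into a single global polytope with exactly $m$ vertices or facets, controlling in particular the contribution of caps straddling patch boundaries. This is the technical core of \cite{Ludwig:1998, Boroczky:2000} in the unweighted case; for the weight $\psi$, no new analytic difficulty appears beyond the uniform approximation of $\psi$ by the piecewise constants $\psi_\alpha$, which is immediate from continuity.
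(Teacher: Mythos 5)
The paper does not in fact prove Theorem~\ref{thm:wb}; it is stated as a known result imported verbatim from \cite{Ludwig:1998} (positive-curvature case) and \cite{Boroczky:2000} (general $C^2$ case), so there is no in-paper proof to compare your sketch against. That said, your reconstruction correctly captures the skeleton of the argument used in those references: cover $\bd K$ by small $C^2$ patches, freeze $\psi$ to a constant $\psi_\alpha$ on each patch by uniform continuity, invoke the local unweighted asymptotics with constants $\ldel_{n-1}$ (Delone-type) and $\ldiv_{n-1}$ (Dirichlet--Voronoi-type), and optimize the allocation of vertices or facets to patches. Your Lagrange-multiplier algebra is correct: minimizing $\sum_\alpha c_\alpha m_\alpha^{-2/(n-1)}$ with $c_\alpha \propto \psi_\alpha A_\alpha^{(n+1)/(n-1)}$ subject to $\sum_\alpha m_\alpha = m$ does give $m_\alpha^\ast \propto c_\alpha^{(n-1)/(n+1)} \propto \psi_\alpha^{(n-1)/(n+1)} A_\alpha$ and optimal value $m^{-2/(n-1)} \bigl(\sum_\alpha c_\alpha^{(n-1)/(n+1)}\bigr)^{(n+1)/(n-1)}$, which in the limit of fine patches yields exactly the displayed integral. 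You also correctly identify the genuine technical content that your sketch leaves open: proving the patchwise bound for \emph{all} competitor polytopes (lower bound), not only for nearly optimal tiling-based ones (upper bound), and gluing patchwise constructions across patch boundaries without losing the sharp constant; this is indeed where the bulk of \cite{Ludwig:1998, Boroczky:2000} lies, and your proposal does not attempt to fill it in. As a blind reconstruction of the cited proof strategy this is sound but remains a sketch; as a response to the paper it should simply note that Theorem~\ref{thm:wb} is quoted, not proved, here.
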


\noindent
The exact values of  $\ldel_{n-1}$ and $\ldiv_{n-1}$ are only known for $n=2$ and $n=3$ (see \cite{BL:1999}). Weighted best approximation was first considered by Glasauer (see \cite{GG:1997}).

\goodbreak
\section{Spherical space}\label{sec:sphere}

Let $\S^n$ denote the unit sphere in $\R^{n+1}$. A set $K\subset \S^n$ is a proper convex body, if it is closed, contained in an open hemisphere and its positive hull $\mathrm{pos}\, K=\{\lambda x: x\in K, \lambda \geq 0\}$ is a convex set in $\R^{n+1}$.
Let $\cK(\S^n)$ denote the set of proper convex bodies in $\S^n$ with non-empty interior.
A hypersphere in $\S^n$ is a set $H=\{x\in\S^n: x\cdot e=0\}$ with $e\in\S^n$, where $\lq\lq\cdot"$ is the inner product in $\R^{n+1}$. Let $H^\pms$ be the closed hemispheres bounded by $H$.
For $\delta >0$, the spherical floating body $K_\delta$ was introduced in \cite{BW:2015} by
\begin{equation}\label{eqn:sfb}
	K_{\delta}=\bigcap \big\{ H^\ms : \vol_n(K\cap H^\ps)\le \delta\big\},
\end{equation}
where $\vol_n$ is spherical volume, that is, the $n$-dimensional Hausdorff measure on $\S^n$. 

Without loss of generality, we may restrict our attention to convex bodies contained in the hemisphere $\S^n_\ps=\{x\in\S^n: x\cdot e_{n+1}>0\}$, where $e_{n+1}$ is a vector of an orthonormal basis of $\R^{n+1}$. The gnomonic (or central) projection $g:\S^n_\ps\to \R^n$ is defined by 
\begin{align*}
	g(x) = \frac x{x\cdot e_{n+1}} -e_{n+1},
\end{align*}
where we identify $\R^n$ with $\{x\in\R^{n+1}: x\cdot e_{n+1}=0\}$ (cf.~\cite[Sec.~4]{BS:2016}). We write $\bar x=g(x)$ and $\bar K =g(K)$.
Note that $g^{-1}: \R^n \to \S^n$ maps the point $\bar x$ to $(1+\|\bar x\|^2)^{-1/2} (\bar x+e_{n+1})$ and has therefore the Jacobian 
$(1+\|\bar x\|^2)^{-(n+1)/2}$ (cf.\ \cite[Proposition 4.2]{BW:2015}). Thus 
the pushforward of  $\vol_n$ under $g$ is the measure $\Psi_n$ with density $\psi_n(\bar x)= (1+\|\bar x\|^2)^{-(n+1)/2}$. For the spherical Gauss-Kronecker curvature, we have
\begin{equation*}\label{sgk}
	\Hs(K, x)
		= H_{n-1}(\bar K, \bar x) \bigg(\frac{1+\|\bar x\|^2}{1+(\bar x\cdot n_{\bar K}(\bar x))^2}\bigg)^{\frac{n+1}2}
\end{equation*}
(cf.\ \cite[Lemma 4.4]{BW:2015}), where $n_{\bar K}(\bar x)$ is the outer unit normal vector to $\bar K$ at $\bar x$,
and consequently
\begin{equation}\label{eqn:area}
	\int_{\bd K}  \Hs(K,x) ^\frac{1}{n+1} \ dx 
		= \int_{\bd \bar K}  H_{n-1}(\bar K,\bar x) ^\frac{1}{n+1} (1+\|\bar x\|^2)^{-\frac{n-1}2} \, d\bar x
\end{equation}
(cf.\ \cite[p.\ 897]{BW:2015}).
These transformation rules allow us to translate the results from Section~\ref{sec:weighted} to spherical space.

The following result is a corollary to Theorem \ref{thm:wfb} and was first established in \cite{BW:2015}.

\begin{theorem}[\!\! \cite{BW:2015}]\label{thms:wfb}
	For $K\in\cK(\S^n)$, 
	\begin{align*}
		\lim_{\delta \to 0} \frac{\vol_n(K) - \vol_n(K_\delta)}{\delta^\frac{2}{n+1}}
		= \alpha_n \int_{\bd K}  \Hs(K,x) ^\frac{1}{n+1}  dx,
	\end{align*}
	where $\alpha_n$ is the constant from \eqnref{eqn:const2}.
\end{theorem}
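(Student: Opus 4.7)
The plan is to transfer the problem from the sphere to Euclidean space via the gnomonic projection $g:\S^n_\ps\to\R^n$ and then invoke Theorem~\ref{thm:wfb} with a carefully chosen weight. Without loss of generality, assume $K\subset\S^n_\ps$, and set $\bar K=g(K)\in\cK(\R^n)$ and $\psi_n(\bar x)=(1+\|\bar x\|^2)^{-(n+1)/2}$.

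The first step is to verify the identification $g(K_\delta)=(\bar K)^{\psi_n}_\delta$. This rests on two facts. First, the gnomonic projection sends hyperspheres in $\S^n_\ps$ to affine hyperplanes in $\R^n$: a hypersphere $H=\{x\in\S^n:x\cdot e=0\}$ intersected with $\S^n_\ps$ is mapped by $g$ to the set $\{\bar x\in\R^n:\bar x\cdot e=-e\cdot e_{n+1}\}$, which is an affine hyperplane, and the two spherical half-spaces bounded by $H$ correspond to the two Euclidean half-spaces. Second, the Jacobian computation recalled before \eqref{eqn:area} shows that the pushforward $g_*\vol_n$ equals the measure $\Psi_n$ with continuous positive density $\psi_n$ on $\bar K$. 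Hence, for every closed half-space $H^\ms\subset\R^n$ and the corresponding spherical half-space $\tilde H^\ms\subset\S^n$, one has $\vol_n(K\cap\tilde H^\ps)=\Psi_n(\bar K\cap H^\ps)$, so the defining intersection \eqref{eqn:sfb} for $K_\delta$ matches, under $g$, the intersection \eqref{eqn:wfb} defining $(\bar K)^{\psi_n}_\delta$.

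Having identified the two floating bodies, the second step is immediate: $\vol_n(K)-\vol_n(K_\delta)=\Psi_n(\bar K)-\Psi_n((\bar K)^{\psi_n}_\delta)$. Applying Theorem~\ref{thm:wfb} to $\bar K\in\cK(\R^n)$ with $\phi=\psi=\psi_n$ (both continuous and strictly positive on $\bar K$) yields
\begin{align*}
  \lim_{\delta\to0}\frac{\vol_n(K)-\vol_n(K_\delta)}{\delta^{2/(n+1)}}
  &=\alpha_n\int_{\bd\bar K}H_{n-1}(\bar K,\bar x)^{1/(n+1)}\,\psi_n(\bar x)^{(n-1)/(n+1)}\,d\bar x\\
  &=\alpha_n\int_{\bd\bar K}H_{n-1}(\bar K,\bar x)^{1/(n+1)}\,(1+\|\bar x\|^2)^{-(n-1)/2}\,d\bar x.
\end{align*}
The third step is then to recognize the right-hand side as the spherical floating area via the transformation rule \eqref{eqn:area}, which gives exactly $\alpha_n\int_{\bd K}\Hs(K,x)^{1/(n+1)}\,dx$.

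The only nontrivial point is the correspondence between spherical caps and Euclidean caps under $g$; once that is in hand, the remainder is bookkeeping with the Jacobian and the two transformation identities already recorded in the excerpt. In particular, the delicate limit is entirely encoded in Theorem~\ref{thm:wfb}, so no new asymptotic analysis on the sphere is required.
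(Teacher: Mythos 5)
Your proposal is correct and follows essentially the same route as the paper's own proof: identify $g(K_\delta)=\bar K^{\psi_n}_\delta$, apply Theorem~\ref{thm:wfb} with $\phi=\psi=\psi_n$, and conclude via \eqnref{eqn:area}. The only difference is that you spell out the verification of the floating-body correspondence (hyperspheres $\leftrightarrow$ hyperplanes, pushforward measure) which the paper states without elaboration.
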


\begin{proof}
	Since $g(K_\delta)= g(K)^{\psi_n}_\delta$, we have
	\begin{align*}
		\vol_n(K)- \vol_n(K_\delta)=\int_{g(K)\backslash g(K)^{\psi_n}_\delta} \psi_n.
	\end{align*}
	Hence Theorem \ref{thm:wfb} with $\phi=\psi=\psi_n$ shows that
	\begin{align*}
		\lim_{\delta \to 0} \frac{\vol_n(K) - \vol_n(K_\delta)} {\delta^\frac{2}{n+1}}
		= \alpha_n \int_{\bd \bar K}  H_{n-1}(\bar K,\bar x) ^\frac{1}{n+1} (1+\|\bar x\|^2)^{-\frac{n-1}2} \ d\bar x.
	\end{align*}
	By \eqnref{eqn:area}, this completes the proof.
\end{proof}

Next, we consider random polytopes that are the spherical convex hull of points chosen uniformly according to $\vol_n$  in $K\in\cK(\S^n)$. In the following, the expectation $\E_K$ is with respect to the probability density $\vol_n/\vol_n(K)$. 

\begin{theorem}\label{thms:wr}
	Let $K\in\cK(\S^n)$. 
	If $K_m$ is the spherical convex hull of $m$ random points chosen uniformly in $K$, then
	\begin{align*}
		\lim_{m\to \infty} \E_K\big(\!\vol_n(K) - \vol_n(K_m)\big) \,{m^\frac{2}{n+1}}
			&= \beta_n \, \vol_n(K)^{\frac{2}{n+1}} \int_{\bd K}  \Hs(K,x)^\frac{1}{n+1} \, dx,
	\end{align*}
	where $\beta_n$ is the constant from \eqnref{eqn:const}.
\end{theorem}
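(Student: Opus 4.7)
The plan is to mirror the proof of Theorem~\ref{thms:wfb}, applying Theorem~\ref{thm:wr} in place of Theorem~\ref{thm:wfb} and identifying the density that arises as the pushforward, under the gnomonic projection $g$, of the uniform spherical probability on $K$.

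First I would use the fact that $g:\S^n_\ps\to\R^n$ sends great circles to lines and therefore carries spherically convex sets to Euclidean convex sets; in particular, the spherical convex hull of $x_1,\dots,x_m\in K$ is mapped to the Euclidean convex hull of $g(x_1),\dots,g(x_m)$ in $\bar K=g(K)$. The Jacobian computation recalled just before \eqnref{eqn:area} shows that the pushforward of $\vol_n$ restricted to $K$ has density $\psi_n(\bar x)=(1+\|\bar x\|^2)^{-(n+1)/2}$ on $\bar K$. Consequently, the image under $g$ of $m$ i.i.d.\ points sampled from the probability measure $\vol_n/\vol_n(K)$ on $K$ is $m$ i.i.d.\ points in $\bar K$ drawn from the probability measure $\Phi$ with continuous density $\phi=\psi_n/\vol_n(K)$, and $g(K_m)=\bar K^{\Phi}_m$. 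In particular,
\begin{align*}
\vol_n(K)-\vol_n(K_m)=\int_{\bar K\setminus\bar K^{\Phi}_m}\psi_n,
\end{align*}
so applying Theorem~\ref{thm:wr} to $\bar K$ with this $\phi$ and with $\psi=\psi_n$ gives
\begin{align*}
\lim_{m\to\infty}\E_K\!\big(\vol_n(K)-\vol_n(K_m)\big)\,m^{\frac{2}{n+1}}
=\beta_n\int_{\bd\bar K}H_{n-1}(\bar K,\bar x)^{\frac{1}{n+1}}\phi(\bar x)^{-\frac{2}{n+1}}\psi_n(\bar x)\,d\bar x.
\end{align*}

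Next I would simplify the integrand: substituting $\phi=\psi_n/\vol_n(K)$ yields
\begin{align*}
\phi(\bar x)^{-\frac{2}{n+1}}\psi_n(\bar x)
=\vol_n(K)^{\frac{2}{n+1}}\psi_n(\bar x)^{\frac{n-1}{n+1}}
=\vol_n(K)^{\frac{2}{n+1}}(1+\|\bar x\|^2)^{-\frac{n-1}{2}}.
\end{align*}
Pulling the constant factor $\vol_n(K)^{2/(n+1)}$ outside the integral and invoking the transformation formula \eqnref{eqn:area} converts the remaining Euclidean integral into $\int_{\bd K}\Hs(K,x)^{1/(n+1)}\,dx$, which is exactly the asserted right-hand side.

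The only point requiring any care is the preservation of convex hulls under $g$, which is classical (central projections send great circles to lines) and is already implicit in the proof of Theorem~\ref{thms:wfb}; beyond that, the argument reduces to straightforward bookkeeping and a direct appeal to Theorem~\ref{thm:wr}, whose continuity hypotheses on $\phi$ and $\psi$ are trivially met since $\psi_n$ is smooth and strictly positive on the compact set $\bar K$.
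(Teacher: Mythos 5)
Your argument is correct and matches the paper's proof essentially verbatim: both set $\phi=\psi_n/\vol_n(K)$ (which is the paper's $\Phi_n=\Psi_n/\Psi_n(g(K))$, since $\Psi_n(g(K))=\vol_n(K)$), apply Theorem~\ref{thm:wr} with $\psi=\psi_n$, and then invoke \eqnref{eqn:area}. The only difference is that you spell out the intermediate simplification $\phi^{-2/(n+1)}\psi_n=\vol_n(K)^{2/(n+1)}(1+\|\bar x\|^2)^{-(n-1)/2}$ explicitly, which the paper leaves implicit.
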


\begin{proof}
	Set $\Phi_n=\Psi_n/\Psi_n(g(K))$. Since $g(K_m)=g(K)^{\Phi_n}_m$, we have
	\begin{align*}
		\E_K\big(\!\vol_n(K) - \vol_n(K_m)\big )
			& = \E_{\Phi_n} \big( \Psi_n(g(K)- \Psi_n(g(K)^{\Phi_n}_m)\big).
	\end{align*}
	Thus the statement follows from Theorem \ref{thm:wr} with $\psi=\psi_n$ and \eqnref{eqn:area}.
\end{proof}

\noindent Theorem \ref{thms:wr} complements a recent result by B\'ar\'any, Hug, Reitzner and Schneider \cite{BHRS:2016} for random polytopes in hemispheres.

\goodbreak

As a consequence of Corollary \ref{cor:wr}, we obtain the following result.

\begin{corollary}\label{cors:wr}
	Let $K\in\cK(\S^n)$.
	If $K_m$ is the spherical convex hull of $m$ random points chosen uniformly in $K$, then
	\begin{align*}
		\lim_{m\to \infty} \E_K f_0(K_m) \,m^{-\frac{n-1}{n+1}}
			= \beta_n\, \vol_n(K)^{-\frac{n-1}{n+1}} \int_{\bd K}  H^\sn_{n-1}(K,x) ^\frac{1}{n+1} \, dx,
	\end{align*}
	where $\beta_n$ is the constant from \eqnref{eqn:const}.
\end{corollary}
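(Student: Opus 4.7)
The plan is to reduce the corollary to Corollary~\ref{cor:wr} in Euclidean space via the gnomonic projection $g:\S^n_\ps\to\rn$, in complete parallel to the proof of Theorem~\ref{thms:wr} given just above. Without loss of generality assume $K\subset\S^n_\ps$, set $\bar K=g(K)\in\cK(\rn)$, and introduce the probability measure $\Phi_n=\Psi_n/\Psi_n(g(K))$, whose density $\phi(\bar x)=\psi_n(\bar x)/\Psi_n(g(K))$ is continuous and strictly positive on $\bar K$. Since $g$ maps great hypercircles to hyperplanes, it sends spherical convex hulls to Euclidean convex hulls and preserves the number of vertices, so $g(K_m)=g(K)^{\Phi_n}_m$ and $f_0(K_m)=f_0(g(K)^{\Phi_n}_m)$. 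Because the push-forward of the uniform probability density on $K$ under $g$ is exactly $\Phi_n$, the two expectations agree: $\E_K f_0(K_m)=\E_{\Phi_n} f_0(g(K)^{\Phi_n}_m)$.

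Applying Corollary~\ref{cor:wr} to $\bar K$ with density $\phi$ then gives
\begin{align*}
	\lim_{m\to\infty} \E_K f_0(K_m)\, m^{-\frac{n-1}{n+1}}
		= \beta_n \int_{\bd\bar K} H_{n-1}(\bar K,\bar x)^{\frac{1}{n+1}} \phi(\bar x)^{\frac{n-1}{n+1}}\,d\bar x.
\end{align*}
Using $\Psi_n(g(K))=\vol_n(K)$ together with $\psi_n(\bar x)=(1+\|\bar x\|^2)^{-(n+1)/2}$, the weight simplifies to
\begin{align*}
	\phi(\bar x)^{\frac{n-1}{n+1}} = \vol_n(K)^{-\frac{n-1}{n+1}} (1+\|\bar x\|^2)^{-\frac{n-1}{2}},
\end{align*}
and the transformation rule \eqnref{eqn:area} then converts the Euclidean boundary integral into $\int_{\bd K}\Hs(K,x)^{1/(n+1)}\,dx$, which yields the claimed limit.

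I expect no substantive obstacle: the corollary is essentially a bookkeeping exercise that mirrors, for vertex counts, the volume-asymptotic argument already executed for Theorem~\ref{thms:wr}. The only point worth verifying is that the exponent of $\psi_n$ produced by Corollary~\ref{cor:wr} exactly matches the $(1+\|\bar x\|^2)^{-(n-1)/2}$ factor in \eqnref{eqn:area}; this is immediate from the identity $\tfrac{n-1}{n+1}\cdot\tfrac{n+1}{2}=\tfrac{n-1}{2}$.
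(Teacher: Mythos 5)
Your proposal is correct and follows exactly the route the paper intends: the paper presents the corollary without a separate proof, as an immediate consequence of Corollary~\ref{cor:wr} obtained via the gnomonic projection, and your bookkeeping with $\phi=\psi_n/\vol_n(K)$ and the transformation rule \eqnref{eqn:area} is precisely that argument.
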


Finally, we consider best approximation.
Let 
\begin{align*}
	\dist_n\!\big(K,\cP^\sn_m\big)&=\inf\big\{\!\vol_n(K\triangle P): P  \text{ spherical polytope with at most $m$ vertices}\big\},\\
\intertext{and}
	\dist_n\!\big(K,\cP^\sn_{(m)}\big)&=\inf\big\{\!\vol_n(K\triangle P): P  \text{ spherical polytope with at most $m$ facets}\big\}.
\end{align*}
We obtain the following result. 

\begin{theorem}\label{thms:wb}
	For $K\in\cK(\S^n)$ with $C^2$ boundary,
	\begin{align*}
		\lim_{m\to \infty} \dist_n\!\big(K, \cP^\sn_m\big) \,{m^\frac{2}{n-1}}
		&= \frac12 \ldel_{n-1} \bigg(\int_{\bd K}  \Hs(K,x)^\frac{1}{n+1}   \ dx\bigg)^{\frac{n+1}{n-1}},\\
	\intertext{and}
		\lim_{m\to \infty} \dist_n\!\big(K, \cP^\sn_{(m)}\big) \,{m^\frac{2}{n-1}}
		&= \frac12 \ldiv_{n-1} \bigg(\int_{\bd K}  \Hs(K,x)^\frac{1}{n+1}   \ dx\bigg)^{\frac{n+1}{n-1}},
	\end{align*}
	where $\ldel_{n-1}$ and $\ldiv_{n-1}$ are the constants from Theorem \ref{thm:wb}.
\end{theorem}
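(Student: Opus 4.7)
The plan is to push the approximation problem forward through the gnomonic projection $g$ and then invoke the Euclidean weighted best approximation result, Theorem~\ref{thm:wb}, with weight $\psi=\psi_n$. This mirrors the strategy used in the proofs of Theorems~\ref{thms:wfb} and~\ref{thms:wr}.

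First I would verify that $g$ induces a bijection between the relevant classes of approximating polytopes. Since $g$ is a diffeomorphism of $\S^n_\ps$ onto $\rn$ that sends the intersections of hyperspheres with the open hemisphere to affine hyperplanes of $\rn$, it maps spherically convex sets to Euclidean convex sets and preserves $C^2$ regularity of the boundary. In particular, $\bar K=g(K)$ is in $\cK(\rn)$ with $C^2$ boundary, and $P\mapsto\bar P=g(P)$ is a bijection between spherical polytopes contained in $\S^n_\ps$ with at most $m$ vertices (resp.\ facets) and Euclidean polytopes in $\rn$ with at most $m$ vertices (resp.\ facets). Since any near-optimal approximating polytope must lie in a small neighborhood of $K\subset\S^n_\ps$, the infima defining $\dist_n(K,\cP^\sn_m)$ and $\dist_n(K,\cP^\sn_{(m)})$ are unchanged by restricting to polytopes inside $\S^n_\ps$.

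Next, using the Jacobian $(1+\|\bar x\|^2)^{-(n+1)/2}$ of $g^{-1}$, the change of variables formula gives $\vol_n(K\triangle P)=\Psi_n(\bar K\triangle\bar P)$ for every such spherical polytope $P$. Combined with the bijection above,
\begin{align*}
	\dist_n\!\big(K,\cP^\sn_m\big)&=\dist_{\Psi_n}\!\big(\bar K,\cP_m\big),\\
	\dist_n\!\big(K,\cP^\sn_{(m)}\big)&=\dist_{\Psi_n}\!\big(\bar K,\cP_{(m)}\big).
\end{align*}
Applying Theorem~\ref{thm:wb} to $\bar K$ with $\psi=\psi_n$, the resulting integrand is
\[
	H_{n-1}(\bar K,\bar x)^{\frac{1}{n+1}}\,\psi_n(\bar x)^{\frac{n-1}{n+1}}=H_{n-1}(\bar K,\bar x)^{\frac{1}{n+1}}(1+\|\bar x\|^2)^{-\frac{n-1}{2}},
\]
and by the identity \eqnref{eqn:area} this integrates over $\bd\bar K$ to $\int_{\bd K}\Hs(K,x)^{1/(n+1)}\,dx$. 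Raising to the power $(n+1)/(n-1)$ then yields the claimed asymptotic formulas with the same constants $\ldel_{n-1}$ and $\ldiv_{n-1}$.

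The only nontrivial point is the clean transfer of the approximation problem under gnomonic projection; once that is in place the result follows by invoking Theorem~\ref{thm:wb} with the specific weight $\psi_n$, exactly as in the proofs of Theorems~\ref{thms:wfb} and~\ref{thms:wr}.
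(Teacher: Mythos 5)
Your proof takes the same route as the paper's: push the approximation problem through the gnomonic projection, observe that $\dist_n(K,\cP^\sn_m)=\dist_{\Psi_n}(\bar K,\cP_m)$ (and likewise for facets), apply Theorem~\ref{thm:wb} with $\psi=\psi_n$, and finish with the identity \eqnref{eqn:area}. The paper states the distance equalities without comment, whereas you supply the supporting detail (the bijection on polytopes, the change-of-variables identity, and the observation that near-optimal polytopes stay in $\S^n_\ps$), but the argument is the same.
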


\begin{proof}
	Note that
	$\dist_n(K,\cP^\sn_m)= \dist_{\Psi_n}(g(K), \cP_m)$ and $\dist_n(K,\cP^\sn_{(m)})= \dist_{\Psi_n}(g(K), \cP_{(m)})$. Thus the statement follows directly from Theorem~\ref{thm:wb} with $\psi=\psi_n$ and \eqnref{eqn:area}.
\end{proof}

\subsection{Duality principle}

Let $K$ be a proper spherical convex body. Instead of random polytopes $K_m$ contained in $K$ we now consider random polytopes $K^m$ containing $K$. The space of closed hemispheres $\mathcal{H}$ of $\S^n$ has a uniquely determined rotation invariant probability measure $\mu$. 
For each point $x\in\S^n$ there is a uniquely determined hemisphere $H^-(x)=\{y\in\S^n:x\cdot y\leq 0\}$ and for a Borel subset $A$ of $\mathcal{H}$ we have
\begin{align*}
	\mu(A) = \frac{1}{\vol_n(\S^n)} \int_{\S^n} \mathbf{1}\big[H^-(x) \in A\big] \, dx.
\end{align*}
A random polytope $K^m$ is obtained as intersection of $m$ closed hemispheres chosen from $\mathcal{H}_K:=\{H^-\in \mathcal{H}: K\subseteq H^-\}$ independently and according to $\mu_K:=\mu/\mu(\mathcal{H}_K)$.

For $K\in\cK(\S^n)$, define the \emph{polar body} $K^\circ$ by
\begin{align*}
	K^\circ = \{y\in\S^n : x\cdot y \leq 0 \text{ for all $x\in K$}\} =\bigcap_{x\in K} H^-(x)
\end{align*}
(cf.~\cite[Sec.~6.5]{SW:2008}).
Since $K^{\circ\circ} = K$,  a hemisphere $H^-(y)$ contains $K$ if and only if $y\in K^\circ$. Thus we have $\mathcal{H}_K = \{H^-(y):y\in K^\circ\}$ and $\mu(\mathcal{H}_K) = \vol_n(K^\circ)$.

Let $K^m$ be the intersection of $m$ randomly chosen closed hemispheres in $\mathcal{H}_K$, that is, there are $x_i \in K^\circ$, $i=1,\ldots,m$, such that $K^m = \bigcap_{i=1}^m H^-(x_i)$. We have
\begin{align*}
	K^m = \big(\mathrm{conv}\{x_1,\ldots,x_m\}\big)^\circ = \big(K^\circ_m\big)^\circ,
\end{align*}
where $K^\circ_m := (K^\circ)_m$. This means, that the polar of a random polytope that contains $K$ is a polytope inside $K^\circ$. In this way we can transfer results about $K^{\circ}_m$ to $(K^m)^{\circ}$.

\begin{theorem}\label{thm:dual}
	If $\,\mathcal{F}$ be a non-negative measurable functional on spherical convex polyhedral sets, then
	\begin{align*}
		\E_{\mu_K} \mathcal{F}(K^m) = \E_{K^\circ} \mathcal{F}\big(\left(K^\circ_m\right)^\circ\big).
	\end{align*}
\end{theorem}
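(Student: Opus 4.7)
The plan is to exhibit an explicit measure-preserving bijection between $\mathcal{H}_K$ equipped with $\mu_K$ and $K^\circ$ equipped with the normalized spherical volume, and then to invoke the already-noted polarity identity $K^m=(K^\circ_m)^\circ$ to rewrite the expectation.

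First I would set up the bijection $\iota:K^\circ\to \mathcal{H}_K$, $y\mapsto H^-(y)$. As the paper observes, $K^{\circ\circ}=K$ implies that $H^-(y)\supseteq K$ if and only if $y\in K^\circ$, so $\iota$ is a bijection. Next I would verify that $\iota$ is measure-preserving in the appropriate sense. From the definition
\begin{align*}
\mu(A)=\frac{1}{\vol_n(\S^n)}\int_{\S^n}\mathbf{1}[H^-(x)\in A]\,dx,
\end{align*}
it follows that $\mu$ is the pushforward under $x\mapsto H^-(x)$ of the normalized uniform measure on $\S^n$. Restricting to $\mathcal{H}_K$ and using $\iota^{-1}(\mathcal{H}_K)=K^\circ$ together with $\mu(\mathcal{H}_K)=\vol_n(K^\circ)/\vol_n(\S^n)$, we obtain that $\mu_K$ is the pushforward of the normalized uniform probability measure $\vol_n(\,\cdot\,\cap K^\circ)/\vol_n(K^\circ)$ on $K^\circ$ under $\iota$.

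Now I would lift this to $m$-fold products. Sampling $H_1,\dots,H_m$ i.i.d.\ from $\mu_K$ is equivalent, via the product bijection $(x_1,\dots,x_m)\mapsto(H^-(x_1),\dots,H^-(x_m))$, to sampling $x_1,\dots,x_m$ independently and uniformly from $K^\circ$ with respect to $\vol_n/\vol_n(K^\circ)$; this is precisely the distribution defining $K^\circ_m$ and hence $\E_{K^\circ}$. Under this coupling, the paper's computation gives
\begin{align*}
K^m=\bigcap_{i=1}^{m}H^-(x_i)=\bigl(\mathrm{conv}\{x_1,\dots,x_m\}\bigr)^\circ=(K^\circ_m)^\circ
\end{align*}
pointwise (almost surely). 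The change of variables formula for pushforward measures therefore yields
\begin{align*}
\E_{\mu_K}\mathcal{F}(K^m)=\E_{K^\circ}\mathcal{F}\bigl((K^\circ_m)^\circ\bigr)
\end{align*}
for any non-negative measurable functional $\mathcal{F}$ on spherical convex polyhedral sets, provided the requisite measurability holds.

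The only technical obstacle is the measurability bookkeeping: one must check that $\iota$ is bi-measurable with respect to the natural Borel structures on $\mathcal{H}$ and on $\S^n$, and that $K^\circ_m\mapsto(K^\circ_m)^\circ$ is a measurable map into the space of spherical convex polyhedral sets so that $\mathcal{F}\circ(\,\cdot\,)^\circ$ is measurable. Both facts follow from standard continuity properties of the polarity operation on proper spherical convex bodies and of the map sending a finite tuple of points to its spherical convex hull, so no serious difficulty arises; conceptually the statement is just Fubini combined with the polarity identity already recorded in the excerpt.
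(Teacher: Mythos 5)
Your proposal is correct and takes essentially the same approach as the paper, which states the theorem without a separate proof precisely because it follows at once from the preceding observations: the map $y\mapsto H^-(y)$ is a measure-preserving bijection from $K^\circ$ with normalized $\vol_n$ onto $(\mathcal{H}_K,\mu_K)$, and under the induced $m$-fold coupling one has $K^m=(K^\circ_m)^\circ$ pointwise. Your formalization via pushforward and the product map, together with the measurability remarks, is exactly the argument the paper leaves implicit.
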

\noindent
In the Euclidean setting a similar results was obtained in \cite[Prop.~5.1]{BFH:2010}.

As an application of this theorem we consider the \emph{spherical mean width} $U_1(K)$ of a spherical convex body $K$, which is defined by
\begin{align*}
	U_1(K) = \frac{1}{2} \int_{G(n+1,n)} \chi(K\cap H)\, d\nu(H),
\end{align*}
where $\chi$ is the Euler characteristic, $G(n+1,n)$ is the Grassmannian of all $n$-dimensional linear subspaces in $\R^{n+1}$ and $\nu$ denotes the invariant probability measure on $G(n+1,n)$. The probability that a random hypersphere hits $K$ is equal to $2U_1(K)$. The name {\em spherical mean width} corresponds to the Euclidean notion of mean width $W(\bar{K})$ for $\bar{K}\in \cK(\R^n)$, which can be defined as the probability of a random affine hyperplane hitting $\bar{K}$. Equivalently, $W(\bar{K})$ is given by (\ref{mean_width}), which, however, does not have a natural analog in the spherical setting.

\begin{corollary}\label{cors:meanwidth}
	Let $K\in \cK(\S^n)$. If $K^m$ is the intersection of $m$ random hemispheres containing $K$ and chosen uniformly according $\mu_K$, then
	\begin{align*}
		\lim_{m\to \infty} \E_{\mu_K}\big(U_1(K^m)-U_1(K)\big)\, m^{\frac{2}{n+1}} 
			&= \frac{\beta_n}{\vol_n(\S^n)}\, \vol_n(K^\circ)^{\frac{2}{n+1}} \int_{\bd K} H_{n-1}^{\S^n}(K,x)^{\frac{n}{n+1}}\, dx,\\
	\intertext{and}
		\lim_{m\to \infty} \E_{\mu_K} f_{n-1}(K^m)\, m^{-\frac{n-1}{n+1}} 
			&= \beta_n\, \vol_n(K^\circ)^{-\frac{n-1}{n+1}} \int_{\bd K} H_{n-1}^{\S^n}(K,x)^{\frac{n}{n+1}} \, dx,
	\end{align*}
	where $\beta_n$ is the constant from \eqnref{eqn:const}.
\end{corollary}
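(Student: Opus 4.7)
The plan is to transfer both asymptotics for the circumscribed random set $K^m$ to asymptotics for the inscribed random polytope $K^\circ_m$ in the polar body $K^\circ$, and then invoke Theorem~\ref{thms:wr} and Corollary~\ref{cors:wr} applied to $K^\circ$. The probabilistic half of this reduction is the duality principle, Theorem~\ref{thm:dual}; the deterministic half rests on two polar identities rewriting $U_1(K^m)$ and $f_{n-1}(K^m)$ in terms of $\vol_n(K^\circ_m)$ and $f_0(K^\circ_m)$.

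\textbf{Polar identities.} First, for every proper $L\in\cK(\S^n)$,
\begin{align*}
    U_1(L) = \frac{1}{2} - \frac{\vol_n(L^\circ)}{\vol_n(\S^n)}.
\end{align*}
The paper records that $2U_1(L)$ is the probability that a uniformly distributed hypersphere $H=u^\perp$ meets $L$, and $H$ misses $L$ precisely when $L$ lies in one of the two open hemispheres bounded by $H$, i.e.\ when $u\in\Int L^\circ$ or $-u\in\Int L^\circ$. For proper $L$ the sets $\Int L^\circ$ and $-\Int L^\circ$ are disjoint up to a null set and have common volume $\vol_n(L^\circ)$. Second, polarity yields $(K^m)^\circ=K^\circ_m$ (since $K^m=(K^\circ_m)^\circ$ and $K^\circ_m$ is almost surely a proper spherical polytope as soon as $m\ge n+1$), and the combinatorial face duality $f_k(P^\circ)=f_{n-1-k}(P)$ gives $f_{n-1}(K^m)=f_0(K^\circ_m)$. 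Combined, these yield
\begin{align*}
    U_1(K^m)-U_1(K) = \frac{\vol_n(K^\circ)-\vol_n(K^\circ_m)}{\vol_n(\S^n)} \quad\text{and}\quad f_{n-1}(K^m)=f_0(K^\circ_m).
\end{align*}

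\textbf{Assembly and polar integral transformation.} Applying Theorem~\ref{thm:dual} with $\mathcal{F}=U_1$ and $\mathcal{F}=f_{n-1}$ transforms the $\E_{\mu_K}$-expectations into $\E_{K^\circ}$-expectations of $\vol_n(K^\circ)-\vol_n(K^\circ_m)$ and of $f_0(K^\circ_m)$, and Theorem~\ref{thms:wr} and Corollary~\ref{cors:wr} applied to $K^\circ$ then deliver the limits with integrand $\Hs(K^\circ,y)^{\frac{1}{n+1}}$ over $\bd K^\circ$. The final step is to rewrite this integral as one over $\bd K$. For $K$ with $C^2$ boundary the spherical Gauss map $x\mapsto\nu_K(x)\in\bd K^\circ$ is a diffeomorphism with Jacobian $\Hs(K,x)$, and spherical polar duality gives $\Hs(K^\circ,\nu_K(x))=1/\Hs(K,x)$, so
\begin{align*}
    \int_{\bd K^\circ}\Hs(K^\circ,y)^{\frac{1}{n+1}}\,dy
    &= \int_{\bd K}\Hs(K,x)^{-\frac{1}{n+1}}\Hs(K,x)\,dx
    = \int_{\bd K}\Hs(K,x)^{\frac{n}{n+1}}\,dx.
\end{align*}
The main obstacle is extending this polar integral identity to arbitrary $K\in\cK(\S^n)$, where the Gauss map is defined only almost everywhere; I would transport the identity to $\R^n$ via the gnomonic projection using \eqnref{eqn:area} together with the analogous Euclidean polar identity, handling the set where $\Hs$ vanishes or the Gauss map is multi-valued by the usual truncation argument. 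Substituting into the two displayed limits (and dividing the first by $\vol_n(\S^n)$) yields the two assertions of the corollary.
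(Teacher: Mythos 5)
Your proposal follows essentially the same route as the paper: rewrite $U_1$ via the Gao--Hug--Schneider identity $U_1(L)=\tfrac12-\vol_n(L^\circ)/\vol_n(\S^n)$ and use face duality $f_{n-1}(K^m)=f_0(K^\circ_m)$, apply the duality principle (Theorem~\ref{thm:dual}) to pass to expectations over $K^\circ$, invoke Theorem~\ref{thms:wr} and Corollary~\ref{cors:wr} on $K^\circ$, and finally convert the integral over $\bd K^\circ$ into one over $\bd K$. The only real divergence is at the last step: the paper simply cites the polar identity $\int_{\bd K^\circ}\Hs(K^\circ,y)^{1/(n+1)}\,dy=\int_{\bd K}\Hs(K,x)^{n/(n+1)}\,dx$ from \cite[Thm.~7.4]{BW:2015}, valid for all $K\in\cK(\S^n)$, whereas you rederive it via the spherical Gauss map in the $C^2_+$ case and then openly flag that extending it to general proper convex bodies (where the Gauss map is only a.e.\ defined and curvature can vanish) requires additional work. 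Your sketch of the smooth case is correct, and your proposed route through the gnomonic projection to an analogous Euclidean polar transformation is plausible but not carried out; the transformation rule \eqnref{eqn:area} only handles the exponent $1/(n+1)$, so you would also need the companion rule for the exponent $n/(n+1)$ and a genuinely non-trivial Euclidean polar change-of-variables argument, which is precisely the content of the cited \cite[Thm.~7.4]{BW:2015}. In short: same proof architecture, but the one nontrivial analytic ingredient is taken as a black box in the paper and only sketched (with an honestly acknowledged gap) in your write-up.
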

\begin{proof}
	By \cite[Eqn.~(20)]{GHS:2002}, we have
	\begin{align*}
		U_1(K) = \frac{1}{2} - \frac{\vol_n(K^\circ)}{\vol_n(\S^n)}.
	\end{align*}
	Also, the facets of $K^m$ correspond to the vertices of $(K^m)^\circ = K^\circ_m$. Thus $f_{n-1}(K^m) = f_0(K_m^\circ)$.
	Hence, by Theorem \ref{thm:dual}, we find
	\begin{align*}
		\E_{\mu_K}\big(U_1(K^m)-U_1(K)\big) 
			= \frac{\E_{K^\circ}\big(\!\vol_n(K^\circ) - \vol_n(K^\circ_m)\big)}{\vol_n(\S^n)},
	\end{align*}
	and
	\begin{align*}
		\E_{\mu_K} f_{d-1}(K^m) = \E_{K^\circ} f_0(K_m^\circ).
	\end{align*}
	Applying Theorem \ref{thms:wr} and Corollary \ref{cors:wr} on $K^\circ$ we obtain
	\begin{align*}
		\lim_{m\to \infty} \E_{\mu_K}\big(U_1(K^m)-U_1(K)\big)\, m^{\frac{2}{n+1}} 
			&= \frac{\beta_n}{\vol_n(\S^n)}\, \vol_n(K^\circ)^{\frac{2}{n+1}} \int_{\bd K^\circ} H_{n-1}^{\S^n}(K^\circ,x)^{\frac{1}{n+1}}\, dx,
	\end{align*}
	and 
	\begin{align*}
		\lim_{m\to \infty} \E_{\mu_K} f_{n-1}(K^m)\, m^{-\frac{n-1}{n+1}} 
			= \beta_n\, \vol_n(K^\circ)^{-\frac{n-1}{n+1}} \int_{\bd K^\circ} H_{n-1}^{\S^n}(K^\circ,x)^{\frac{1}{n+1}} \, dx,
	\end{align*}
	By \cite[Thm.~7.4]{BW:2015}, we have
	\begin{align*}
		\int_{\bd K^\circ} H_{n-1}^{\S^n}(K^\circ,x)^{\frac{1}{n+1}}\, dx = 
		\int_{\bd K} H_{n-1}^{\S^n}(K,x)^{\frac{n}{n+1}}\, dx,
	\end{align*}
	which concludes the proof.
\end{proof}

\goodbreak
\section{Hyperbolic space}\label{sec:hyperbolic}

Let $\R^{n,1}$ denote the Lorentz-Minkowski space of dimension $n+1$, that is,  $\R^{n+1}$ with the indefinite inner product ``$\circ$'' defined by
\begin{align*}
	x\circ x 
		= x_1^2+\dots+ x_n^2 - x_{n+1}^2.
\end{align*}
Then the hyperboloid model of hyperbolic space is given by
\begin{align*}
	\H^n
		=\big\{x\in\R^{n,1} : x\circ x = -1 \text{ and } x_{n+1} > 0 \big\}.
\end{align*}
The hyperbolic distance $d_H$ between two points $x,y\in\H^n$ is determined by $\cosh\, d_H(x,y) = -x\circ y$.
A set $K\subset \H^n$ is a convex body, if it is compact and the positive hull is a convex set in $\R^{n+1}$. Let $\cK(\H^n)$ denote the set of convex bodies in $\H^n$ with non-empty interior. 
For a hyperplane $H$ let $H^{\pms}$ be the closed half-spaces bounded by $H$. For $\delta>0$, the hyperbolic floating body $K_\delta$ was introduced in \cite{BW:2016} by
\begin{align*}
	K_{\delta} = \bigcap \big\{H^\ms:\vol_n(K\cap H^\ps)\le \delta\big\},
\end{align*}
where $\vol_n$ is the hyperbolic volume on $\H^n$.

We fix a Lorentz-orthonormal basis $e_1,e_2,\ldots, e_{n+1}$ in $\R^{n,1}$ such that $e_{n+1}$ is in $\H^n$. The gnomonic (or central) projection $g\colon \H^n\to \R^n$ is defined by
\begin{align*}
	g(x) = \frac{x}{x\circ e_{n+1}} + e_{n+1},
\end{align*}
where we identify $\R^n$ with $\{x\in\R^{n,1}:x\circ e_{n+1} = 0\}$. We write $\bar{x} = g(x)$ and $\bar{K}=g(K)$. 
Since 
\begin{align*}
	\|\bar{x}\|^2 = 1-(x\circ e_{n+1})^{-2} = \tanh^2\, d_H(x,e_{n+1}),
\end{align*}
we have $\|\bar{x}\|\in [0,1)$. 
Therefore the gnomonic projection maps $\H^n$ into the open unit ball $\Int \B^n\subset \R^n$.
Note that $g^{-1}\colon \Int \B^n\to \H^n$ maps the point $\bar{x}$ to $(1-\|\bar{x}\|^2)^{-1/2}(\bar{x}+e_{n+1})$. The gnomonic projection is an isometry between the hyperboloid model $\H^n$ and the projective model (or Beltrami--Cayley--Klein model) $\Int \B^n$. Thus the pushforward of $\vol_n$ under $g$ is the measure $\Psi_n$ with density $\psi_n(\bar{x})  = (1-\|\bar{x}\|^2)^{-(n+1)/2}$. For the hyperbolic Gauss--Kronecker curvature, we have 
\begin{align*}
	\Hh(K,x) = H_{n-1}(\bar{K},\bar{x})\left(\frac{1-\|\bar{x}\|^2}{1-(\bar{x}\cdot n_{\bar{K}}(\bar{x}))^2}\right)^{\frac{n+1}{2}}
\end{align*}
(cf.\ \cite[Cor.\ 3.16]{BW:2016}), and furthermore
\begin{equation}\label{eqn:areah}
	\int_{\bd K} \Hh(K,x)^{\frac{1}{n+1}}\, dx = \int_{\bd \bar{K}} H_{n-1}(\bar{K},\bar{x})^{\frac{1}{n+1}} (1-\|\bar{x}\|^2)^{-\frac{n-1}{2}}\, d\bar{x}
\end{equation}
(cf.\ \cite[(3.12)]{BW:2016}). So again, these transformation rules allow us to translate the results from Section~1 to hyperbolic space. The proofs are identical to those in spherical space (just replace (\ref{eqn:area}) by (\ref{eqn:areah})) and are therefore omitted.

As a corollary to Theorem \ref{thm:wfb} we obtain the existence of floating area for hyperbolic space, which was originally established in \cite{BW:2016}.
\begin{theorem}[\!\! \cite{BW:2016}]\label{thmh:wfb}
	For $K\in\cK(\H^n)$, 
	\begin{align*}
		\lim_{\delta \to 0} \frac{\vol_n(K) - \vol_n(K_\delta)} {\delta^\frac{2}{n+1}}
		= \alpha_n \int_{\bd K}  \Hh(K,x) ^\frac{1}{n+1}  dx,
	\end{align*}
	where $\alpha_n$ is defined in Theorem \ref{thm:wfb}.
\end{theorem}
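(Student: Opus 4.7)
The plan is to mirror the proof of Theorem \ref{thms:wfb} in the spherical case, using the gnomonic projection $g\colon \H^n \to \Int \B^n$ to reduce the statement to an application of Theorem \ref{thm:wfb} in $\R^n$.

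First I would verify that $g(K_\delta) = g(K)^{\psi_n}_\delta$ as subsets of $\Int\B^n$, where $\psi_n(\bar x) = (1-\|\bar x\|^2)^{-(n+1)/2}$. This hinges on two facts about the projective (Beltrami--Cayley--Klein) model: (i) $g$ maps hyperbolic hyperplanes to the intersection of Euclidean hyperplanes with $\Int \B^n$, so caps of $K$ cut off by hyperbolic hyperplanes correspond bijectively to caps of $\bar K$ cut off by Euclidean hyperplanes; and (ii) the pushforward of $\vol_n$ under $g$ is the measure $\Psi_n$ with density $\psi_n$, as recorded just before the theorem. Consequently, the condition $\vol_n(K\cap H^\ps)\le \delta$ translates precisely to $\Psi_n(\bar K\cap \bar H^\ps)\le \delta$, and taking intersections of complementary half-spaces gives the claimed identity of floating bodies.

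Next I would write
\begin{align*}
	\vol_n(K) - \vol_n(K_\delta)
		&= \Psi_n(\bar K) - \Psi_n\bigl(\bar K^{\psi_n}_\delta\bigr)
		 = \int_{\bar K\setminus \bar K^{\psi_n}_\delta} \psi_n
\end{align*}
and apply Theorem \ref{thm:wfb} with $\phi=\psi=\psi_n$. This yields
\begin{align*}
	\lim_{\delta\to 0}\frac{\vol_n(K)-\vol_n(K_\delta)}{\delta^{2/(n+1)}}
		&= \alpha_n \int_{\bd \bar K} H_{n-1}(\bar K,\bar x)^{\frac{1}{n+1}} \psi_n(\bar x)^{-\frac{2}{n+1}} \psi_n(\bar x)\, d\bar x\\
		&= \alpha_n \int_{\bd \bar K} H_{n-1}(\bar K,\bar x)^{\frac{1}{n+1}} (1-\|\bar x\|^2)^{-\frac{n-1}{2}}\, d\bar x,
\end{align*}
where in the second line I used $\psi_n^{(n-1)/(n+1)} = (1-\|\bar x\|^2)^{-(n-1)/2}$. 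Finally, invoking the transformation rule \eqnref{eqn:areah} rewrites the right-hand side as $\alpha_n\int_{\bd K} \Hh(K,x)^{1/(n+1)}\, dx$, which is the desired limit.

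The only genuinely non-routine step is the first one, the identification $g(K_\delta) = g(K)^{\psi_n}_\delta$; everything else is a direct substitution into results already cited. This step is, however, essentially a restatement of the defining property of the projective model (geodesic hyperplanes go to affine hyperplanes) combined with the Jacobian computation for $g^{-1}$, both of which are available in \cite{BW:2016}. Thus the proof is short and genuinely parallel to the spherical case, the only change being the replacement of the factor $(1+\|\bar x\|^2)$ by $(1-\|\bar x\|^2)$ throughout, and the use of \eqnref{eqn:areah} in place of \eqnref{eqn:area}.
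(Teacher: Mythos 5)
Your proposal is correct and follows exactly the argument the paper intends: the paper explicitly omits this proof, stating that it is identical to the spherical case (Theorem \ref{thms:wfb}) with \eqnref{eqn:area} replaced by \eqnref{eqn:areah}, and your write-up reproduces that spherical argument verbatim with the appropriate sign change from $(1+\|\bar x\|^2)$ to $(1-\|\bar x\|^2)$. The key identity $g(K_\delta) = g(K)^{\psi_n}_\delta$ and the application of Theorem \ref{thm:wfb} with $\phi=\psi=\psi_n$ are precisely what the paper uses.
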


Next, we consider random polytopes that are the hyperbolic convex hull of points chosen uniformly according to $\vol_n$  in $K\in\cK(\H^n)$. In the following, the expectation $\E_K$ is with respect to the density $\vol_n/\vol_n(K)$. 

\begin{theorem}\label{thmh:wr}
	Let $K\in\cK(\H^n)$. 
	If $K_m$ is the hyperbolic convex hull of $m$ random points chosen uniformly in $K$, then
	\begin{align*}
		\lim_{m\to \infty} \E_K\big(\!\vol_n(K) - \vol_n(K_m)\big) \,{m^\frac{2}{n+1}}
			&= \beta_n \, \vol_n(K)^{\frac{2}{n+1}} \int_{\bd K}  \Hh(K,x)^\frac{1}{n+1} \ dx,
	\end{align*}
	where $\beta_n$ is the constant from \eqnref{eqn:const}.
\end{theorem}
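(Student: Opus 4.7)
The plan is to follow the spherical template (proof of Theorem~\ref{thms:wr}) essentially verbatim, with the gnomonic projection $g\colon\H^n\to\Int\B^n$ playing the same role as the spherical gnomonic projection. The two structural facts needed are that $g$ is a \emph{projective} diffeomorphism (so it preserves convex hulls and hence maps $K_m$ to the convex hull of the corresponding Euclidean image points) and that the pushforward of $\vol_n$ under $g$ is the measure $\Psi_n$ of density $\psi_n(\bar x)=(1-\|\bar x\|^2)^{-(n+1)/2}$, as recalled above.

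First I would set $\Phi_n:=\Psi_n/\Psi_n(\bar K)$, where $\bar K = g(K)$. Since $g$ sends uniform samples in $(K,\vol_n/\vol_n(K))$ bijectively to $\Phi_n$-distributed samples in $\bar K$ and commutes with convex hull, we have $g(K_m)=\bar K^{\Phi_n}_m$ and
\begin{align*}
\E_K\bigl(\vol_n(K)-\vol_n(K_m)\bigr)
=\E_{\Phi_n}\bigl(\Psi_n(\bar K)-\Psi_n(\bar K^{\Phi_n}_m)\bigr).
\end{align*}

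Next I would apply Theorem~\ref{thm:wr} to $\bar K$ with $\phi:=\psi_n/\Psi_n(\bar K)$ and $\psi:=\psi_n$. Using the exponent identity $\tfrac{n+1}{2}\bigl(1-\tfrac{2}{n+1}\bigr)=\tfrac{n-1}{2}$, the weight appearing in the integrand of \eqref{eqn:randomapprox} simplifies to
\begin{align*}
\phi^{-\frac{2}{n+1}}\psi = \Psi_n(\bar K)^{\frac{2}{n+1}}\,\psi_n^{\frac{n-1}{n+1}} = \Psi_n(\bar K)^{\frac{2}{n+1}}\,(1-\|\bar x\|^2)^{-\frac{n-1}{2}}.
\end{align*}
Pulling the constant $\Psi_n(\bar K)^{2/(n+1)}=\vol_n(K)^{2/(n+1)}$ outside the integral and applying the transformation rule~\eqref{eqn:areah} to recognise the remaining Euclidean boundary integral as $\int_{\bd K}\Hh(K,x)^{1/(n+1)}\,dx$ then gives the claimed limit with constant $\beta_n$.

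No genuine obstacle arises; the argument is bookkeeping on top of Theorem~\ref{thm:wr}. The only point deserving care is verifying that the normalisation $\phi\propto\psi_n$ produces precisely the exponent $-(n-1)/2$ on $1-\|\bar x\|^2$ required to match \eqref{eqn:areah}. This is the exponent arithmetic displayed above and is the exact hyperbolic counterpart of the spherical computation, with $1+\|\bar x\|^2$ replaced by $1-\|\bar x\|^2$ and \eqref{eqn:area} replaced by \eqref{eqn:areah}.
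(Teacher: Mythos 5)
Your proposal is correct and matches the paper's approach exactly: the paper remarks that the hyperbolic proofs are identical to the spherical ones with \eqref{eqn:area} replaced by \eqref{eqn:areah}, and the spherical proof of Theorem~\ref{thms:wr} is precisely the argument you give — set $\Phi_n=\Psi_n/\Psi_n(g(K))$, use $g(K_m)=g(K)^{\Phi_n}_m$, apply Theorem~\ref{thm:wr} with $\psi=\psi_n$, and translate via the area transformation rule. The exponent arithmetic you spell out is the same implicit bookkeeping the paper leaves to the reader.
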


As a consequence, we obtain the following result.

\begin{corollary}\label{corh:wr}
	Let $K\in\cK(\H^n)$.
	If $K_m$ is the hyperbolic convex hull of $m$ random points chosen uniformly in $K$, then
	\begin{align*}
		\lim_{m\to \infty} \E_K f_0(K_m) \,m^{-\frac{n-1}{n+1}}
			&= \beta_n \, \vol_n(K)^{-\frac{n-1}{n+1}} \int_{\bd K}  H^\hn_{n-1}(K,x) ^\frac{1}{n+1} \, dx,
	\end{align*}
	where $\beta_n$ is the constant from \eqnref{eqn:const}.
\end{corollary}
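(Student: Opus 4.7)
The plan is to mirror the proof of Corollary \ref{cors:wr} and reduce to the Euclidean Corollary \ref{cor:wr} via the gnomonic projection $g\colon \H^n\to \Int\B^n$. Since this projection realizes the projective (Beltrami--Cayley--Klein) model, geodesic segments are mapped to Euclidean line segments, and so $g$ sends hyperbolic convex hulls of finite point sets to Euclidean convex hulls of their images. Set $\Phi_n = \Psi_n/\Psi_n(g(K))$; using the Jacobian $(1-\|\bar x\|^2)^{-(n+1)/2}$ of $g^{-1}$, one sees $\Psi_n(g(K)) = \vol_n(K)$, so $\Phi_n$ is a probability measure on $\bar K = g(K)$ with continuous density $\phi(\bar x) = \psi_n(\bar x)/\vol_n(K)$. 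By the pushforward property of $\vol_n$ under $g$, projecting $m$ points drawn uniformly (with respect to $\vol_n/\vol_n(K)$) in $K$ yields $m$ independent $\Phi_n$-distributed points in $\bar K$, so $g(K_m) = g(K)_m^{\Phi_n}$. Since $g$ is a homeomorphism, vertex counts are preserved, giving $\E_K f_0(K_m) = \E_{\Phi_n} f_0(g(K)_m^{\Phi_n})$.

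Next, I would apply Corollary \ref{cor:wr} to the Euclidean body $\bar K$ with probability density $\phi$ to obtain
\begin{equation*}
\lim_{m\to \infty} \E_{\Phi_n} f_0\big(g(K)_m^{\Phi_n}\big)\, m^{-\frac{n-1}{n+1}} = \beta_n \int_{\bd \bar K} H_{n-1}(\bar K,\bar x)^{\frac{1}{n+1}}\, \phi(\bar x)^{\frac{n-1}{n+1}}\, d\bar x.
\end{equation*}
Factoring the constant $\vol_n(K)^{-(n-1)/(n+1)}$ out of the integral and using the identity $\psi_n(\bar x)^{(n-1)/(n+1)} = (1-\|\bar x\|^2)^{-(n-1)/2}$ converts the integrand into $H_{n-1}(\bar K,\bar x)^{1/(n+1)}\,(1-\|\bar x\|^2)^{-(n-1)/2}$, which by the transformation formula \eqnref{eqn:areah} equals $\int_{\bd K}\Hh(K,x)^{1/(n+1)}\,dx$. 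This yields precisely the claimed limit.

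The main obstacle is not analytic but bookkeeping: one must verify that $g$ simultaneously (i) transports the uniform probability measure $\vol_n/\vol_n(K)$ on $K$ to the density $\phi$ on $\bar K$, (ii) intertwines hyperbolic and Euclidean convex hulls of finite point sets, and (iii) preserves the combinatorial structure so that $f_0(K_m) = f_0(g(K_m))$. These are the hyperbolic counterparts of the facts used in the proofs of Theorems \ref{thms:wfb} and \ref{thms:wr}, and once they are in hand the result is an immediate consequence of Corollary \ref{cor:wr} combined with \eqnref{eqn:areah}, which is why this proof is omitted in the text.
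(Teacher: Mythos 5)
Your proof is correct and follows exactly the route the paper intends: the paper explicitly states that in the hyperbolic section ``the proofs are identical to those in spherical space (just replace \eqnref{eqn:area} by \eqnref{eqn:areah}) and are therefore omitted,'' and you have reconstructed that omitted argument faithfully, reducing to the Euclidean Corollary~\ref{cor:wr} via the gnomonic projection, using $\Psi_n(g(K))=\vol_n(K)$, the identity $\psi_n(\bar x)^{(n-1)/(n+1)}=(1-\|\bar x\|^2)^{-(n-1)/2}$, and the curvature transformation~\eqnref{eqn:areah}. The bookkeeping points you flag, that $g$ intertwines convex hulls and preserves vertex counts because it realizes the projective (Beltrami--Cayley--Klein) model, are exactly the facts underlying the paper's spherical argument as well.
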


Finally, we consider best approximation.
Let 
\begin{align*}
	\dist_n\big(K,\cP^\hn_m\big)&=\inf\big\{\!\vol_n(K\triangle P): P  \text{ hyperbolic polytope with at most $m$ vertices}\big\},\\
\intertext{and}
	\dist_n\big(K,\cP^\hn_{(m)}\big)&=\inf\big\{\!\vol_n(K\triangle P): P  \text{ hyperbolic polytope with at most $m$ facets}\big\}.
\end{align*}
We obtain the following result. 

\begin{theorem}\label{thmh:wb}
	For $K\in\cK(\S^n)$ with $C^2$ boundary,
	\begin{align*}
		\lim_{m\to \infty} \dist_n\big(K, \cP^\hn_m\big) \,{m^\frac{2}{n-1}}
			&= \frac12 \ldel_{n-1} \bigg(\int_{\bd K}  \Hh(K,x)^\frac{1}{n+1}   \ dx\bigg)^{\frac{n+1}{n-1}},\\
	\intertext{and}
		\lim_{m\to \infty} \dist_n\big(K, \cP^\hn_{(m)}\big) \,{m^\frac{2}{n-1}}
			&= \frac12 \ldiv_{n-1} \bigg(\int_{\bd K}  \Hh(K,x)^\frac{1}{n+1}   \ dx\bigg)^{\frac{n+1}{n-1}},
	\end{align*}
	where $\ldel_{n-1}$ and $\ldiv_{n-1}$ are the constants from Theorem \ref{thm:wb}.
\end{theorem}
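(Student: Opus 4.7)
The proof parallels that of Theorem \ref{thms:wb} in the spherical setting, with \eqref{eqn:area} replaced by \eqref{eqn:areah} and the hemisphere $\S^n_\ps$ replaced by the hyperboloid $\H^n$ (whose gnomonic image is $\Int\B^n$). The strategy is to push the problem forward by the gnomonic projection $g\colon \H^n\to \Int\B^n$ into a Euclidean weighted best approximation problem, and then apply Theorem \ref{thm:wb} with weight $\psi = \psi_n$, where $\psi_n(\bar x)=(1-\|\bar x\|^2)^{-(n+1)/2}$.

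\textbf{Step 1: Transfer via the gnomonic projection.} Since $g$ is a projective diffeomorphism between $\H^n$ and $\Int \B^n$, it maps hyperbolic hyperplanes to the traces of affine hyperplanes in $\Int\B^n$. Consequently, $P\mapsto g(P)$ is a bijection between hyperbolic polytopes contained in $\H^n$ and Euclidean polytopes contained in $\Int\B^n$, preserving the number of vertices and the number of facets. Moreover, since the pushforward of $\vol_n$ under $g$ is $\Psi_n$, we have $\vol_n(K\triangle P)=\Psi_n(\bar K\triangle \bar P)$ for every such $P$.

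\textbf{Step 2: Identify the distances.} By Step~1,
\begin{align*}
	\dist_n\big(K,\cP^\hn_m\big) &= \inf\big\{\Psi_n(\bar K\triangle Q): Q \subset \Int\B^n \text{ polytope with at most $m$ vertices}\big\},\\
	\dist_n\big(K,\cP^\hn_{(m)}\big) &= \inf\big\{\Psi_n(\bar K\triangle Q): Q \subset \Int\B^n \text{ polytope with at most $m$ facets}\big\}.
\end{align*}
For large $m$, any near-optimal polytope approximating $\bar K$ must lie in any prescribed open neighborhood of $\bar K$, hence inside $\Int\B^n$; thus restricting to polytopes in $\Int\B^n$ does not change the asymptotic rate, and the right-hand sides coincide asymptotically with $\dist_{\Psi_n}(\bar K,\cP_m)$ and $\dist_{\Psi_n}(\bar K,\cP_{(m)})$ respectively.

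\textbf{Step 3: Apply Theorem \ref{thm:wb} and simplify.} Applying Theorem~\ref{thm:wb} to $\bar K\in\cK(\rn)$ with $\psi=\psi_n$ and noting that $\psi_n(\bar x)^{(n-1)/(n+1)} = (1-\|\bar x\|^2)^{-(n-1)/2}$, the boundary integral in \eqref{eqn:bestapprox} becomes
\begin{align*}
	\int_{\bd \bar K} H_{n-1}(\bar K,\bar x)^{\frac{1}{n+1}}(1-\|\bar x\|^2)^{-\frac{n-1}{2}}\,d\bar x
		= \int_{\bd K}\Hh(K,x)^{\frac{1}{n+1}}\,dx
\end{align*}
by \eqref{eqn:areah}. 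Substituting yields both claimed limits.

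\textbf{Main obstacle.} The only non-routine point is justifying that the infimum over Euclidean polytopes in Theorem~\ref{thm:wb}, which in principle ranges over all polytopes in $\R^n$, may be replaced by an infimum over polytopes in $\Int\B^n$ without affecting the asymptotics; this is needed to identify $\dist_n(K,\cP^\hn_m)$ with $\dist_{\Psi_n}(\bar K,\cP_m)$ in the limit. This follows from the standard observation that asymptotically optimal approximating polytopes must Hausdorff-converge to $\bar K$, and therefore eventually lie inside any fixed open neighborhood $U\subset\Int\B^n$ of $\bar K$; the weight $\psi_n$ is continuous and bounded away from $0$ and $\infty$ on any such $U$, so the hypotheses of Theorem~\ref{thm:wb} apply on $U$ and the conclusion is unaffected.
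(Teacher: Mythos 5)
Your proof is correct and follows essentially the same approach as the paper, which simply declares the hyperbolic proofs ``identical to those in spherical space (just replace \eqnref{eqn:area} by \eqnref{eqn:areah})'' and thus reduces to the two-line argument given for Theorem~\ref{thms:wb}: identify $\dist_n(K,\cP^\hn_m)$ with $\dist_{\Psi_n}(\bar K,\cP_m)$ under the gnomonic projection, apply Theorem~\ref{thm:wb} with $\psi=\psi_n$, and invoke \eqnref{eqn:areah}. Your Step~2 and the ``main obstacle'' paragraph correctly flag, and adequately resolve, the one point the paper glosses over -- that $g(\H^n)=\Int\B^n$ rather than all of $\R^n$, so the correspondence between hyperbolic polytopes and Euclidean ones holds only for polytopes inside $\Int\B^n$; since asymptotically optimal approximants Hausdorff-converge to $\bar K$ and $\psi_n$ is continuous and positive on any compact neighborhood of $\bar K$ inside $\Int\B^n$, this restriction does not affect the limit.
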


\goodbreak
\section{Hilbert geometries}\label{sec:hilbert}

Hilbert's Fourth Problem asks for a characterization of metric geometries
whose geodesics are straight lines. Hilbert constructed a special class of
examples, now called Hilbert geometries (see \cite[Ch.~15]{HandbookHilbert:2014} for more information).
A Hilbert geometry $(C,d_C)$ is defined on the interior of a convex body $C\in \cK(\rn)$
in the following way:
For distinct points $x, y \in \Int C$, the line passing through $x$ and $y$ meets $\bd C$  at two points $p$ and $q$, say, such that one has $p, x, y, q$ in that order on the line. Define the Hilbert distance of $x$ and $y$ by
\begin{align*}
	d_C(x, y) = \frac12 \log [p,x, y, q],
\end{align*}
where $[p,x,y, q]$ is the cross ratio of $p,x, y, q$, that is,
\begin{align*}
	[p,x, y, q] = \frac{\| y - p\|}{\|x - p\|} \, \frac{\| x - q\|}{\|y - q\|}.
\end{align*}
Note that the invariance of the cross ratio by projective maps implies the
projective invariance of $d_C$.
Unbounded closed convex sets with nonempty interiors and not containing a
straight line are projectively equivalent to convex bodies.  Hence the definition of Hilbert geometry naturally extends to the interiors of such convex sets.  If $C$ is an ellipsoid, then the Hilbert geometry on $\Int C$ is isometric to hyperbolic space. 

Straight lines are geodesics in a Hilbert geometry $(C,d_C)$ and if $C$ is strictly convex,
then the affine segment between to distinct points is the unique geodesic joining them (see e.g. \cite[p.~60]{HandbookHilbert:2014}). 
Hence, if $C$ is strictly convex, then hyperplanes are the totally geodesic submanifolds of co-dimension $1$. 
A convex body $K\in\mathcal{K}(\rn)$ that is contained in $\Int C$
is therefore also a convex body of the Hilbert geometry $(C,d_C)$ and polytopes are an intrinsic notion of $(C, d_C)$.
Thus we may consider polytopal approximation in 
a Hilbert geometry $(C,d_C)$ for a strictly convex body $C$. In the following $\mathcal{K}(C)$ denotes the space of convex bodies $K\subset \Int C$.

The Hilbert metric $d_C$ is induced by a weak Finsler structure in the following way: 
For $x\in \Int C$ define a (weak) Minkowski norm $\|.\|_x$ by
\begin{align*}
	\|v\|_{x}= \frac12 \Big( \frac1{t^\ps} +\frac1{t^\mss}\Big),
\end{align*}
for $v\in\R^n$, where $t^\pms$ is determined by $x\pm t^\pms v\in \bd C$. 
If we identify $\R^n$ with the tangent space $T_x\,\R^n$,
then $\|\cdot\|_x$ defines a Minkowski norm on $T_x\,\R^n$ for every $x\in\Int C$. 
The map $F_C:x \mapsto \|\cdot\|_x$ defines a (weak) Finsler structure on $\Int C$. The length of a $C^1$ curve $\gamma : [a,b]\to \Int C$ is defined by
\begin{align*}
	\ell(\gamma) = \int_a^b \|\dot{\gamma}(t)\|_{\gamma(t)}\, dt, 
\end{align*}
and the Hilbert metric between two distinct points $x,y\in\Int C$ is just the minimal length of a $C^1$ curve joining them. 
In particular, if $C$ is $C^2_+$, that is, the boundary of $C$ is a $C^2$ manifold 
with positive curvature, then $(\Int C, F_C)$ 
defines a Finsler manifold in the classical sense.

The unit ball of the Minkowski norm $\|\cdot\|_x$ is 
$I^C_{x}=\{v\in \rn: \| v \|_{x}\le 1\}$. 
Recall that the polar body $K^*$ of a convex body $K$ is defined by 
$K^*=\{y\in\R^n: x\cdot y \leq 1 \text{ for all $x\in K$}\}$ 
and the difference body $D\,K$ is defined by $D(K)=\frac{1}{2}(K-K) =\frac12\{x-y: x,y\in K\}$. 
For a fixed $x\in \Int C$ we find
\begin{align*}
	\|v\|_{x} = h\big(D (C-x)^*,v\big) \text{ and }
	I^C_{x} = \big(D (C-x)^*\big)^*.
\end{align*}
Hence $I^C_x$ is the \emph{harmonic symmetrization} of $C$ in $x$ (see \cite{PT:2009}). 
\goodbreak

There are several good choices for volume $\vol_C$ in $(C,d_C)$ 
which give a projective invariant notion of volume; 
for example, the Busemann volume or the Holmes-Thompson volume of the associated Finsler manifold. 
The Busemann volume is the $n$-dimensional Hausdorff volume of the metric space $(C,d_C)$. 
Its density function with respect to Lebesgue measure $\lambda_n$ is given by
$v_n/\lambda_n(I^C_x)$. The Holmes-Thompson volume has density 
$\lambda_n((I^C_x)^*)/v_n$.
Both, the Busemann and the Holmes-Thompson volume, 
have the property that the density $\sigma_C$ is non-negative and continuous. 
This allows us to directly apply the results from Section~1 
to Hilbert geometries with these volume densities. 

First, we consider random polytopes that are the convex hull of 
points chosen uniformly according to $\vol_C$  in $K\in\cK(C)$.
In the following, the expectation $\E_K$ is with respect to the density $\vol_C/\vol_C(K)$.

\begin{theorem}\label{thmhg:wr}
	Let $K\in\cK(C)$. 
	If $K_m$ is the convex hull of $m$ random points chosen uniformly in $K$ with respect to $\vol_C$, then
	\begin{align*}
		\lim_{m\to \infty} \E_K\big(\!\vol_C(K) - \vol_C(K_m)\big) \,{m^\frac{2}{n+1}}
			&= \beta_n \, \vol_C(K)^{\frac{2}{n+1}} \int_{\bd K}  H_{n-1}(K,x) ^\frac{1}{n+1} \sigma_C(x)^{\frac{n-1}{n+1}}\, dx,
	\end{align*}
	where $\beta_n$ is the constant from \eqnref{eqn:const}.
\end{theorem}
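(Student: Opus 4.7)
The plan is to reduce directly to Theorem \ref{thm:wr} by choosing the two densities appropriately. The Busemann or Holmes--Thompson volume on $(C,d_C)$ has, with respect to Lebesgue measure, a density $\sigma_C \colon \Int C \to (0,\infty)$ given by $v_n/\lambda_n(I^C_x)$ or $\lambda_n((I^C_x)^*)/v_n$ respectively; since $I^C_x$ varies continuously with $x$ and $K \subset \Int C$ is compact, the restriction $\sigma_C|_K$ is continuous and bounded below by a positive constant. Taking $\psi := \sigma_C$ and $\phi := \sigma_C/\vol_C(K)$ gives a pair of continuous positive functions on $K$ with $\phi$ a probability density, and the random polytope $K^\Phi_m$ of Theorem \ref{thm:wr} is the Euclidean convex hull of $m$ i.i.d.\ points drawn according to $\Phi$.

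Next I would verify that this Euclidean random polytope coincides with the Hilbert random polytope appearing in the statement. Since $C$ is strictly convex, the unique geodesic of $(C,d_C)$ joining two points is the affine segment between them, so the Hilbert convex hull of any finite subset of $\Int C$ equals its Euclidean convex hull. In particular, choosing $m$ points uniformly with respect to $\vol_C$ inside $K$ is the same as sampling them from $\Phi$, and the resulting $K_m$ is exactly $K^\Phi_m$. Consequently $\vol_C(K) - \vol_C(K_m) = \Psi(K) - \Psi(K^\Phi_m)$ and $\E_K = \E_\Phi$ under this identification.

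Now Theorem \ref{thm:wr} applies and yields
\begin{align*}
	\lim_{m\to\infty} \E_K\big(\vol_C(K) - \vol_C(K_m)\big)\, m^{\frac{2}{n+1}}
		&= \beta_n \int_{\bd K} H_{n-1}(K,x)^{\frac{1}{n+1}} \phi(x)^{-\frac{2}{n+1}} \psi(x)\, dx.
\end{align*}
A direct computation gives
\begin{align*}
	\phi(x)^{-\frac{2}{n+1}} \psi(x)
		= \vol_C(K)^{\frac{2}{n+1}} \sigma_C(x)^{1 - \frac{2}{n+1}}
		= \vol_C(K)^{\frac{2}{n+1}} \sigma_C(x)^{\frac{n-1}{n+1}},
\end{align*}
and pulling the constant $\vol_C(K)^{2/(n+1)}$ out of the integral yields exactly the claimed formula.

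The proof is therefore essentially bookkeeping once Theorem \ref{thm:wr} is in hand; the only step requiring genuine geometric input is the identification of the Hilbert and Euclidean convex hulls, and the key structural point is the continuity and positivity of $\sigma_C$ on the compact set $K \subset \Int C$. The main obstacle, if any, lies not in this proof but in having set up in the previous subsections the fact that $\sigma_C$ genuinely has these regularity properties for both the Busemann and Holmes--Thompson choices.
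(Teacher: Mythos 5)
Your proof is correct and follows the same route the paper takes (the paper states this theorem without a written proof, relying on the remark that the continuity and positivity of $\sigma_C$ on $K$ allow Theorem~\ref{thm:wr} to be applied directly). Your choice $\psi=\sigma_C$, $\phi=\sigma_C/\vol_C(K)$, the observation that strict convexity of $C$ makes Hilbert and Euclidean convex hulls agree, and the computation $\phi^{-2/(n+1)}\psi=\vol_C(K)^{2/(n+1)}\sigma_C^{(n-1)/(n+1)}$ match the intended argument exactly.
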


As a consequence of Corollary \ref{cor:wr}, we obtain the following result.

\begin{corollary}\label{corhg:wr}
	Let $K\in\cK(C)$.
	If $K_m$ is the convex hull of $m$ random points chosen uniformly in $K$ with respect to $\vol_C$, then
	\begin{align*}
		\lim_{m\to \infty} \E_K f_0(K_m) \,m^{-\frac{n-1}{n+1}}
			&= \beta_n \, \vol_C(K)^{-\frac{n-1}{n+1}} \int_{\bd K}  H_{n-1}(K,x) ^\frac{1}{n+1} \sigma_C(x)^{\frac{n-1}{n+1}}\, dx,
	\end{align*}
	where $\beta_n$ is the constant from \eqnref{eqn:const}.
\end{corollary}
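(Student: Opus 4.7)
The plan is to reduce the statement to a direct application of Corollary \ref{cor:wr} by identifying the correct probability density $\phi$ on $K$.

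First I would observe that the Hilbert volume $\vol_C$ is absolutely continuous with respect to Lebesgue measure with continuous and strictly positive density $\sigma_C$ on $\Int C$. This holds for both the Busemann and the Holmes--Thompson choices noted before Theorem \ref{thmhg:wr}, since $I^C_x$ is a bounded convex body with non-empty interior for every $x \in \Int C$. Consequently, the law of a point chosen uniformly in $K$ with respect to $\vol_C$ is exactly the probability measure $\Phi$ on $K$ with continuous density
$$\phi(x) = \frac{\sigma_C(x)}{\vol_C(K)}, \qquad x \in K.$$
In particular $K_m = K_m^\Phi$ in the notation of Section \ref{sec:weighted}, and $\E_K = \E_\Phi$.

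Next I would invoke Corollary \ref{cor:wr} with this choice of $\phi$. It immediately gives
$$\lim_{m\to\infty} \E_\Phi f_0(K_m^\Phi)\, m^{-\frac{n-1}{n+1}} = \beta_n \int_{\bd K} H_{n-1}(K,x)^{\frac{1}{n+1}} \bigg(\frac{\sigma_C(x)}{\vol_C(K)}\bigg)^{\frac{n-1}{n+1}} dx,$$
and pulling the constant factor $\vol_C(K)^{-(n-1)/(n+1)}$ outside the integral yields the claimed formula.

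There is no real obstacle here: the entire content of the statement is already packaged in Corollary \ref{cor:wr}, and the step for Hilbert geometries amounts to the observation that $\sigma_C$ has the positivity and continuity needed to serve as a weight. This mirrors the strategy used for the spherical analogue (Corollary \ref{cors:wr}) and its hyperbolic counterpart (Corollary \ref{corh:wr}); the only difference is that the weight is the intrinsic Hilbert density $\sigma_C$ rather than the explicit conformal factor coming from a gnomonic projection, so no transformation identity analogous to \eqnref{eqn:area} or \eqnref{eqn:areah} is needed.
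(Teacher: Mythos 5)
Your proof is correct and follows exactly the route the paper intends: the paper states Corollary~\ref{corhg:wr} without explicit proof as "a consequence of Corollary~\ref{cor:wr}," and your instantiation $\phi = \sigma_C/\vol_C(K)$ together with the observations that $\sigma_C$ is continuous and positive on $K$ and that Hilbert convex hulls of points coincide with Euclidean ones (since affine segments are the Hilbert geodesics) is precisely the intended argument. Nothing to add.
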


Next, we consider best approximation.
Let 
\begin{align*}
	\dist_C\big(K,\cP^C_m\big)
		&=\inf\big\{\!\vol_C(K\triangle P): P\subset \Int C  \text{ polytope with at most $m$ vertices}\big\},\\
\intertext{and}
	\dist_C\big(K,\cP^C_{(m)}\big)
		&=\inf\big\{\!\vol_C(K\triangle P): P\subset \Int C  \text{ polytope with at most $m$ facets}\big\}.
\end{align*}
We obtain the following result. 

\begin{theorem}\label{thmhg:wb}
	For $K\in\cK(C)$ with $C^2$ boundary,
	\begin{align*}
		\lim_{m\to \infty} \dist_C\big(K, \cP^C_m\big) \,{m^\frac{2}{n-1}}
			&= \frac12 \ldel_{n-1} \bigg(\int_{\bd K}  H_{n-1}(K,x) ^\frac{1}{n+1}  \sigma_C(x) ^{\frac{n-1}{n+1}} \, dx\bigg)^{\frac{n+1}{n-1}},\\
	\intertext{and}
		\lim_{m\to \infty} \dist_C\big(K, \cP^C_{(m)}\big) \,{m^\frac{2}{n-1}}
			&= \frac12 \ldiv_{n-1} \bigg(\int_{\bd K}  H_{n-1}(K,x) ^\frac{1}{n+1}  \sigma_C(x) ^{\frac{n-1}{n+1}} \, dx\bigg)^{\frac{n+1}{n-1}},
	\end{align*}
	where $\ldel_{n-1}$ and $\ldiv_{n-1}$ are  the constants from Theorem \ref{thm:wb}.
\end{theorem}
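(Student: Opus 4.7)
The plan is to reduce Theorem~\ref{thmhg:wb} to Theorem~\ref{thm:wb} applied with the weight $\psi = \sigma_C$. The reduction rests on two structural facts about the Hilbert geometry $(C,d_C)$: since $C$ is strictly convex, straight lines are the geodesics, so a subset of $\Int C$ is a Hilbert polytope if and only if it is a Euclidean polytope; and on $\Int C$ the volume $\vol_C$ has the continuous strictly positive density $\sigma_C$ against Lebesgue measure, so $\vol_C(A) = \int_A \sigma_C(x)\,dx$ for every Borel set $A \subset \Int C$.

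Since $K$ is compact and contained in $\Int C$, I would fix $\varepsilon > 0$ with $N := K + \varepsilon\, \B^n \subset \Int C$, and take $\psi:\R^n \to (0,\infty)$ to be any continuous strictly positive extension of $\sigma_C|_N$, writing $\Psi$ for the associated measure. For any polytope $P \subset N$ one has $\vol_C(K \triangle P) = \Psi(K \triangle P)$, and every polytope admissible for the Hilbert problem is an ordinary Euclidean polytope lying in $\Int C$. Hence
\begin{align*}
\dist_C\big(K,\cP^C_m\big) \ge \dist_\Psi\big(K,\cP_m\big) \quad\text{and}\quad \dist_C\big(K,\cP^C_{(m)}\big) \ge \dist_\Psi\big(K,\cP_{(m)}\big),
\end{align*}
so Theorem~\ref{thm:wb} with $\psi = \sigma_C$ already supplies the correct lower bound in both cases.

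For the matching upper bound I would invoke the explicit asymptotically optimal polytopes underlying the proofs of Theorem~\ref{thm:wb} in \cite{Ludwig:1998, Boroczky:2000}. These are built from Delone-type partitions of $\bd K$ and produce a sequence $P_m$ with at most $m$ vertices (respectively, $m$ facets) that realizes the limit in \eqref{eqn:bestapprox} and satisfies $P_m \to K$ in the Hausdorff metric. For all sufficiently large $m$ this forces $P_m \subset N \subset \Int C$, so $P_m$ is admissible for the Hilbert problem and $\dist_C(K,\cP^C_m) \le \Psi(K \triangle P_m)$. Passing to the limit yields the matching upper bound, and combining with the lower bound above gives the claim; the facet case is identical.

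The main technical point is the Hausdorff convergence $P_m \to K$ of near-optimal Euclidean approximants, needed to guarantee that they fit inside $\Int C$. This is implicit in the constructions of \cite{Ludwig:1998, Boroczky:2000}, but if one wishes to argue directly it also follows from $\Psi(K\triangle P_m) \to 0$ together with the uniform positive lower bound on $\psi$ on any fixed compact neighborhood of $K$ and a convexity argument ruling out protrusions of $P_m$ with fixed Hausdorff excess beyond $K$.
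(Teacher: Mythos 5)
Your proposal follows the same route the paper takes: for strictly convex $C$, polytopes in $(C,d_C)$ are Euclidean polytopes and $\vol_C$ has a continuous positive density $\sigma_C$, so Theorem~\ref{thm:wb} is applied with $\psi=\sigma_C$. The paper in fact treats this reduction as immediate and gives no further argument, so you are being more careful than the source. Two small points. First, your lower bound $\dist_C(K,\cP^C_m)\ge\dist_\Psi(K,\cP_m)$ does not follow just because $\psi=\sigma_C$ on the neighbourhood $N$ and the Hilbert-admissible polytopes form a subclass: you also need $\vol_C(K\triangle P)\ge\Psi(K\triangle P)$ for admissible $P$ that stick out of $N$, i.e.\ $\psi\le\sigma_C$ on all of $\Int C$. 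This is easily arranged (take $\psi=\min(\sigma_C,M)$ on $\Int C$ with $M>\max_N\sigma_C$ and extend continuously), but as written the inequality is not justified. Second, the parenthetical ``direct'' argument for Hausdorff convergence of near-optimal $P_m$ is a bit hand-wavy: $\Psi(K\triangle P_m)\to 0$ alone does not rule out thin spikes. The convexity argument you gesture at does close this (a convex $P_m$ containing most of a fixed ball in $K$ and a point at Hausdorff distance $\ge\eta$ from $K$ must contain a cone of definite volume outside $K$), but one should either spell that out or simply cite Hausdorff convergence from the constructions in \cite{Ludwig:1998,Boroczky:2000}, as you do in the main line of the argument.
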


Finally, we obtain the following result for the weighted floating body $K^{\sigma_C}_\delta$.

\begin{theorem}\label{thmhg:wfb}
	For $K\in\cK(C)$, 
	\begin{align*}
		\lim_{\delta \to 0} \frac{\vol_C(K) - \vol_C(K^{\sigma_C}_\delta)} {\delta^\frac{2}{n+1}}
			&= \alpha_n \int_{\bd K}  H_{n-1}(K,x) ^\frac{1}{n+1} \sigma_C(x)^{\frac{n-1}{n+1}} \,dx,
	\end{align*}
	where $\alpha_n$ is defined in Theorem \ref{thm:wfb}.
\end{theorem}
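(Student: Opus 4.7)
The plan is to deduce this as an immediate specialization of Theorem \ref{thm:wfb} by choosing both the cutting weight and the integration weight to coincide with $\sigma_C$. First I would record the key compatibility: the measure $\Phi$ with density $\phi = \sigma_C|_K$ is exactly the Hilbert volume $\vol_C$ restricted to $K$, so the weighted floating body $K^\phi_\delta$ of \eqnref{eqn:wfb} coincides with $K^{\sigma_C}_\delta$; taking $\psi = \sigma_C|_K$ as well, the measure $\Psi$ is again $\vol_C$, hence $\Psi(K) - \Psi(K^\phi_\delta) = \vol_C(K) - \vol_C(K^{\sigma_C}_\delta)$. This uses in an essential way the fact noted earlier that, because $C$ is strictly convex, the totally geodesic hypersurfaces of $(C,d_C)$ are precisely the Euclidean hyperplanes intersected with $\Int C$, so the family of admissible cuts in the Hilbert definition and in \eqnref{eqn:wfb} is the same.

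Next I would verify that $\sigma_C\colon K\to(0,\infty)$ meets the hypothesis of Theorem \ref{thm:wfb}. Continuity on $\Int C$ is already stated in the text for both the Busemann density $x\mapsto v_n/\lambda_n(I^C_x)$ and the Holmes--Thompson density $x\mapsto \lambda_n((I^C_x)^*)/v_n$. Strict positivity on the compact set $K\subset \Int C$ follows because, for every $x\in\Int C$, the harmonic symmetrization $I^C_x$ is a bounded, symmetric convex body with non-empty interior, so both $\lambda_n(I^C_x)$ and $\lambda_n((I^C_x)^*)$ lie in $(0,\infty)$; by compactness of $K$ the density is bounded away from $0$ and $\infty$ on $K$.

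With these checks, Theorem \ref{thm:wfb} applied with $\phi=\psi=\sigma_C$ gives
\begin{align*}
    \lim_{\delta \to 0} \frac{\vol_C(K) - \vol_C(K^{\sigma_C}_\delta)}{\delta^{2/(n+1)}}
        = \alpha_n \int_{\bd K} H_{n-1}(K,x)^{\frac{1}{n+1}} \sigma_C(x)^{-\frac{2}{n+1}} \sigma_C(x)\, dx,
\end{align*}
and the combined exponent $1-\frac{2}{n+1}=\frac{n-1}{n+1}$ yields exactly the claimed formula. The argument is a one-line specialization of the general weighted result, so there is no real obstacle; the only delicate point, should one want to make the proof self-contained, is confirming that the Hilbert density is continuous and strictly positive on $K$ for the chosen notion of volume, which is built into the standard definitions of the Busemann and Holmes--Thompson volumes.
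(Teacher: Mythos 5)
Your proposal is correct and matches the paper's implicit argument: the paper states that the Busemann and Holmes--Thompson densities are continuous and positive and that the results of Section~1 therefore apply directly, which is exactly the specialization $\phi=\psi=\sigma_C$ of Theorem~\ref{thm:wfb} that you carry out, with the exponent computation $-\tfrac{2}{n+1}+1=\tfrac{n-1}{n+1}$ giving the stated integrand.
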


Note that the floating area
\begin{align*}
	\Omega_C(K)=\int_{\bd K}  H_{n-1}(K,x) ^\frac{1}{n+1} \sigma_C(x)^{\frac{n-1}{n+1}} \,dx,
\end{align*}
depends on the Hilbert geometry $(C,d_C)$ and the choice of the volume density $\sigma_C$. 
Let $\cK_{(0)}(\rn)$ be the set of convex bodies in $\rn$ containing the origin in their interiors.
For $C\in \cK_{(0)}(\rn)$ and $\lambda <1$, the floating area 
$\Omega_C(\lambda C)$ is a centro-affine (or $\gln$) invariant by the definition of floating area and 
the projective invariance of the volume $\vol_C$ (however, note that $\Omega_C(\lambda C)$ is not a projective invariant). 
For the limiting case $\lambda\to1$ and the Busemann floating area, we obtain the following result. 
The proof is based on results by Berck, Bernig, and Vernicos \cite{BBV:2010}, 
who studied the limiting behavior of the volume entropy of $\lambda C$.

\begin{theorem}\label{thm:omegacp}
	For $C\in\cK_{(0)}(\rn)$ with $C^{1,1}$ boundary, 
	\begin{align*}
		\Omega_{n}(C)
			&= 2^{\frac{n-1}2} \lim_{\lambda \to 1-} \Omega_C(\lambda  C)  (1-\lambda)^{\frac{n-1}2},
	\end{align*}
	where $\Omega_C$ is the Busemann floating area.
\end{theorem}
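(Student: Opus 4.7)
My plan is to reduce the statement to a pointwise asymptotic expansion of the Busemann density near $\bd C$ and then to apply dominated convergence on $\bd C$. As a first step I parametrize $\bd(\lambda C)$ by the homothety $p\mapsto\lambda p$ with $p\in\bd C$. The transformation rules $n_{\lambda C}(\lambda p)=n_C(p)$, $H_{n-1}(\lambda C,\lambda p)=\lambda^{-(n-1)}H_{n-1}(C,p)$ and $d\mathcal{H}^{n-1}(\lambda p)=\lambda^{n-1}\,d\mathcal{H}^{n-1}(p)$ yield
\begin{align*}
\Omega_C(\lambda C)=\lambda^{\frac{n(n-1)}{n+1}}\int_{\bd C}H_{n-1}(C,p)^{\frac{1}{n+1}}\,\sigma_C(\lambda p)^{\frac{n-1}{n+1}}\,d\mathcal{H}^{n-1}(p),
\end{align*}
and the prefactor tends to $1$. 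The theorem therefore reduces to computing the limit of this integral after multiplication by $(1-\lambda)^{(n-1)/2}$.

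The decisive ingredient is a pointwise asymptotic for $\sigma_C(\lambda p)$ as $\lambda\to 1^-$, following the analysis of Berck, Bernig and Vernicos \cite{BBV:2010}. Fix $p\in\bd C$ and write $\tau=p\cdot n_C(p)$. In local coordinates centered at $p$ with $e_n=n_C(p)$, the body $C$ is locally a graph $z_n=-f(z^\perp)$ with $f\ge 0$, $\nabla f(0)=0$ and $\mathrm{Hess}\,f(0)=\mathrm{II}_p$. Introducing the anisotropic rescaling $v=(1-\lambda)\alpha\,n_C(p)+\sqrt{1-\lambda}\,\beta$ with $\alpha\in\R$, $\beta\in T_p\bd C$, a direct expansion of the equations $\lambda p\pm t^\pm v\in\bd C$ to leading order in $1-\lambda$ gives
\begin{align*}
\|v\|_{\lambda p}\longrightarrow\frac{\sqrt{\alpha^2+2\tau\,\langle \mathrm{II}_p\beta,\beta\rangle}}{2\tau}.
\end{align*}
Hence, in the rescaled variables, the unit ball $I^C_{\lambda p}$ converges to the ellipsoid $E_p=\{(\alpha,\beta):\alpha^2+2\tau\,\langle\mathrm{II}_p\beta,\beta\rangle\le 4\tau^2\}$, whose Lebesgue volume is $v_n\,(2\tau)^{(n+1)/2}/\sqrt{H_{n-1}(C,p)}$. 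Combining this with the Jacobian $(1-\lambda)^{(n+1)/2}$ of the rescaling yields
\begin{align*}
\sigma_C(\lambda p)\sim (1-\lambda)^{-\frac{n+1}{2}}\,\frac{\sqrt{H_{n-1}(C,p)}}{(2\tau)^{(n+1)/2}}\qquad(\lambda\to 1^-).
\end{align*}

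Plugging this asymptotic into the integrand, the rescaled expression $(1-\lambda)^{(n-1)/2}\,H_{n-1}(C,p)^{1/(n+1)}\,\sigma_C(\lambda p)^{(n-1)/(n+1)}$ converges pointwise on $\bd C$ to $2^{-(n-1)/2}\,H_{n-1}(C,p)^{1/2}\,(p\cdot n_C(p))^{-(n-1)/2}$. Since the centro-affine surface area admits the representation $\Omega_n(C)=\int_{\bd C}H_{n-1}(C,p)^{1/2}(p\cdot n_C(p))^{-(n-1)/2}\,d\mathcal{H}^{n-1}(p)$, once the interchange of limit and integration is justified the claimed identity follows after multiplication by $2^{(n-1)/2}$.

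The main obstacle is exactly this interchange. The $C^{1,1}$ assumption, via the interior and exterior rolling-ball property, supplies two-sided uniform bounds on the principal curvatures of $\bd C$, and $0\in\Int C$ together with compactness gives a positive lower bound for $\tau$ on $\bd C$. These two ingredients make the expansion of $\|v\|_{\lambda p}$ uniform in $p\in\bd C$ and, crucially, provide an integrable majorant for the rescaled integrand for $\lambda$ close to $1$. Such uniform estimates on harmonically symmetrized volumes are precisely what Berck, Bernig and Vernicos \cite{BBV:2010} establish in their study of the volume entropy of $(\lambda C,d_C)$; once they are in place, dominated convergence concludes the proof.
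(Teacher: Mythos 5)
Your argument is correct and follows essentially the same route as the paper: reparametrize $\bd(\lambda C)$ by a homothety to express $\Omega_C(\lambda C)$ as an integral over $\bd C$, use the pointwise asymptotic $\sigma_C(\lambda p)(1-\lambda)^{(n+1)/2}\to H_{n-1}(C,p)^{1/2}/(2\,p\cdot n_C(p))^{(n+1)/2}$ from Berck--Bernig--Vernicos together with their uniform bound $\sigma_C(\lambda p)\le c(1-\lambda)^{-(n+1)/2}$, and conclude by dominated convergence. The only difference is presentational: you sketch a derivation of the BBV pointwise asymptotic via the rescaled unit-ball limit, whereas the paper simply cites their Propositions~2.8 and~2.10.
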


\noindent
Here $\Omega_{n}(C)$ is the classical {\em centro-affine surface area} of $C$ which is defined as
\begin{align*}
	\Omega_{n}(C)
		&=\int_{\bd C} \frac{ H_{n-1}(C,x)^\frac{1}{2}}{ \big(x\cdot n_C(x)\big)^{\frac{n-1}2}} \,dx.
\end{align*}
Centro-affine surface area is an upper semicontinuous and $\gln$ invariant valuation on $\cK^n_{(0)}(\rn)$. Moreover, it is basically the only such functional (see \cite{LR:2010}). For more information on centro-affine surface area, which is also called $L^n$-affine surface area, see \cite{Lutwak:1996, SW:2004, MW:2000}.

\begin{proof}
	Berck, Bernig, and Vernicos \cite[Proposition 2.8]{BBV:2010} obtained that 
	\begin{equation}\label{eqn:berck}
		\lim_{\lambda \to 1-} \sigma_C(\lambda x) (1-\lambda)^{\frac{n+1}2}
			= \frac{H_{n-1}(C, x)^{\frac12}}{\big(2\,x\cdot n_C(x)\big)^\frac{n+1}2},
	\end{equation}
	for $x\in\bd C$. Using a version of Blaschke's rolling theorem, they also showed in \cite[Proposition 2.10]{BBV:2010} that
	\begin{equation}\label{eqn:roll}
		\sigma_C(\lambda x) \le c (1-\lambda)^{-\frac{n+1}2},
	\end{equation}
	where the constant $c$ does not depend on $x$ and $\lambda$.
	Thus, 
	\begin{align*}
		\lim_{\lambda \to 1-} \Omega_C(\lambda C) (1-\lambda)^{\frac{n-1}2}
			&= \lim_{\lambda \to 1-} \lambda^{n\frac{n-1}{n+1}} 
				\int_{\bd C}  H_{n-1}(C,x)^\frac{1}{n+1} 
				\Big((1-\lambda)^{\frac{n+1}2}  \sigma_C(\lambda x)\Big)^\frac{n-1}{n+1} \,dx \\
 			&=  \int_{\bd C}  H_{n-1}(C,x)^\frac{1}{n+1}  
 				\Bigg( \frac{H_{n-1}(C, x)^{\frac12}}{\big(2\,x\cdot n_C(x)\big)^\frac{n+1}2}\Bigg)^\frac{n-1}{n+1} \,dx\\
 			&= 2^{-\frac{n-1}2} \Omega_n(C),
	\end{align*}
	where the last inequality uses Lebesgue's Dominated Convergence Theorem and (\ref{eqn:roll}).
\end{proof}

Theorem \ref{thm:omegacp} holds true not only for the Busemann volume, but also for other notions of volume. This follows, since according to Berck, Bernig, and Vernicos \cite{BBV:2010}, equation \eqnref{eqn:berck} holds true for the volume densities of all volumes that satisfies the following very general assumptions:
\begin{itemize}
	\item The volume measure $\vol_C$ is a Borel measure on $\Int C$ and absolutely continuous with respect to the Lebesgue measure.
	\item If $A\subset C \subset C'$ where $C,C'\in\cK(\R^n)$, then $\vol_C(A)\geq \vol_{C'}(A)$.
	\item If $C$ is an ellipsoid, then $\vol_C$ is the hyperbolic volume.
\end{itemize}
All volume measures that satisfy these conditions are equivalent, i.e., if $\sigma_C$ and $\bar{\sigma}_C$ are the volume densities of two volume measures $\vol_C$ and $\bar{\vol}_C$, then there exist positive real constants $a,b$ such that 
\begin{align*}
	a \sigma_C(x) \leq \bar{\sigma}_C(x) \leq b \sigma_C(x),
\end{align*}
see e.g.\ \cite[p.~249]{HandbookHilbert:2014}.
Hence, by \eqnref{eqn:roll}, we conclude that
\begin{align*}
	\bar{\sigma}_C(\lambda x) \le bc (1-\lambda)^{-\frac{n+1}{2}}.
\end{align*}
Therefore Theorem \ref{thm:omegacp} also holds for any volume measure that satisfies these conditions and
in particular for the Holmes-Thompson volume.

\section{Proof of Theorem \ref{thm:wfb}}\label{sec:proof}

The first step of the proof  is the following disintegration result, which follows easily from the area formula (see e.g.\ \cite[Prop.~3.7]{BW:2016} or \cite[Lem.~4.2]{BFH:2010} for related results).

\begin{lemma}\label{lem:coneformula}
	Let $K,L$ be convex bodies such that $L\subseteq K$ and $0\in\Int L$. For $x\in\bd K$,  
	\begin{align*}
		\Psi(K)-\Psi(L) 
			&= \int_{\bd K} n_K(x)\cdot (x\|x\|^{-n}) \int_{\|x_L\|}^{\|x\|} \psi(tx\|x\|^{-1})t^{n-1}\, dt\, dx,
	\end{align*}
	where  $\{x_L\} = \bd L \cap [0,x]$. 
\end{lemma}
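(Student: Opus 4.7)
The strategy is to write $\Psi(K)-\Psi(L)$ in spherical coordinates centred at the origin and then to re-parametrize the unit sphere by $\bd K$ via the radial projection.

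First, since $0\in\Int L\subseteq K$, every direction $u\in\S^{n-1}$ meets $\bd L$ at a unique point $\rho_L(u)u$ and $\bd K$ at a unique point $\rho_K(u)u$ with $0<\rho_L(u)\le\rho_K(u)$. Writing Lebesgue measure on $\rn$ in spherical coordinates $y=tu$, $dy=t^{n-1}\,dt\,du$, yields
\begin{align*}
	\Psi(K)-\Psi(L)
		=\int_{K\setminus L}\psi(y)\,dy
		=\int_{\S^{n-1}}\int_{\rho_L(u)}^{\rho_K(u)}\psi(tu)\,t^{n-1}\,dt\,du.
\end{align*}

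Next, I would apply the area formula to the radial projection $\pi\colon\bd K\to\S^{n-1}$, $x\mapsto x/\|x\|$. Because $0\in\Int K$ and $K$ is a convex body, $\pi$ is a bi-Lipschitz bijection and the outer unit normal $n_K(x)$ exists at $\cH^{n-1}$-almost every $x\in\bd K$; its Jacobian at such a point equals
\begin{align*}
	J\pi(x)=\frac{n_K(x)\cdot x}{\|x\|^n}.
\end{align*}
The quickest way to see this identity is to compute the volume of the infinitesimal cone with apex $0$ over a boundary element $dx\subset\bd K$ in two ways: the base-times-height formula gives $\tfrac{1}{n}(n_K(x)\cdot x)\,dx$, while spherical coordinates produce $\tfrac{1}{n}\|x\|^n\,du$ for the corresponding solid angle $du$ at $\pi(x)$. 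Equating yields $du=(n_K(x)\cdot x)\|x\|^{-n}\,dx$.

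Finally, substituting $u=x/\|x\|$, $\rho_K(u)=\|x\|$, $\rho_L(u)=\|x_L\|$, and the above expression for $du$ into the outer integral of the spherical-coordinate formula produces exactly the stated identity. The only technicality is the $\cH^{n-1}$-a.e.\ existence of $n_K$ together with the bi-Lipschitz property of $\pi$, both standard for convex bodies with the origin in the interior, so the area formula applies in its usual form. No genuinely deep obstacle appears; the lemma is a change of variables, which is why it can honestly be said to follow easily from the area formula.
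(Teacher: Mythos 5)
Your proposal is correct and follows the same route the paper alludes to: the paper gives no detailed argument for this lemma, simply asserting that it "follows easily from the area formula" and pointing to related computations in \cite{BW:2016} and \cite{BFH:2010}. Your spherical-coordinate decomposition of $\Psi(K)-\Psi(L)$ followed by the change of variables $u=x/\|x\|$ from $\S^{n-1}$ to $\bd K$, with the Jacobian $n_K(x)\cdot x\,\|x\|^{-n}$ obtained by the cone-volume comparison, is exactly the standard derivation being invoked; the hypotheses $0\in\Int L\subseteq K$ guarantee the radial map is a bi-Lipschitz bijection and that $n_K(x)\cdot x>0$ a.e., so the area formula applies as you say.
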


The next step is to give upper and lower bounds of the weighted floating body $K^\phi_\delta$ by a re-parametrized Euclidean floating body. To be more precise, we find $\delta_1=\delta_1(\delta)$ and $\delta_2=\delta_2(\delta)$ such that $0<\delta_1\leq \delta_2$ and $K_{\delta_2} \subseteq K_{\delta}^\phi \subseteq K_{\delta_1}$. Before we go into the details of this proof, we need to fix a few notions.

For $v\in\S^{n-1}$ and $t\in \R$,  define, as before, the closed halfspaces
$H^-(v,t):=\{y\in\R^n : y\cdot v \leq t\}$ and $H^+(v,t) := H^-(-v,-t)$.
The weighted floating body $K_\delta^\phi$ can be expressed as
\begin{align}\label{eqn:floating_par}
	K_\delta^\phi = \bigcap \Big\{H^-\big(v,t_{\delta}(v)\big) : v\in\S^{n-1}\Big\},
\end{align}
where $t_\delta(v)=t(K,\phi,\delta,v)$ is determined implicitly by
\begin{align}\label{eqn:floating_par_t}
	\delta 
		= \Phi\Big(K\cap H^+\big(v,t_{\delta}(v)\big)\Big) 
		= \int_{t_\delta(v)}^{h_K(v)} \int_{K\,\cap\,H(v,s)} \phi(x) \,d\lambda_{H(v,s)}(x)\, ds.
\end{align}
Here $\lambda_{H(v,s)}$ is the Lebesgue measure in the affine hyperplane $H(v,s)=\{y\in\R^n:y\cdot v = s\}$.
Note that there exists $\delta_0>0$ such that the function $t_\delta(v)$ is continuous for $(\delta,v)\in [0,\delta_0)\times \S^{n-1}$ and $t_0(v) = h_K(v)$.

\begin{lemma}
	Let $K\in\cK(\R^n)$ and $\varepsilon\in(0,\min_{\bd K} \phi)$.
	For
	\begin{align}\label{eqn:par_bound}
		\alpha &:= \min_{\bd K} \phi-\varepsilon, 
		& 
		\beta  &:= \max_{\bd K} \phi+\varepsilon,
	\end{align}
	there exists $\delta_0 =\delta_0(\varepsilon)>0$ such that for all $\delta\in(0,\delta_0)$, we have
	\begin{align*}
		K_{\delta/\alpha} \subseteq K_\delta^\phi \subseteq K_{\delta/\beta}.
	\end{align*}
\end{lemma}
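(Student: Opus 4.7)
The plan is to pick a tubular neighbourhood of $\bd K$ of width $\rho$ in which $\phi$ is squeezed between $\alpha$ and $\beta$, and then to take $\delta_0$ small enough that every cap $K\cap H^+$ entering the definitions \eqnref{eqn:wfb} of $K_\delta^\phi$, $K_{\delta/\alpha}$, and $K_{\delta/\beta}$ is forced into that tube. Once the caps are trapped there, both inclusions collapse to the elementary bound $\alpha\,\vol(K\cap H^+)\leq \Phi(K\cap H^+)\leq \beta\,\vol(K\cap H^+)$.

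Since $\phi$ is continuous on the compact set $K$, uniform continuity supplies $\rho>0$ with $|\phi(x)-\phi(y)|<\varepsilon$ whenever $\|x-y\|<\rho$; hence $\alpha\leq\phi(x)\leq\beta$ for every $x\in K$ with $\dist(x,\bd K)<\rho$, since such $x$ has some $y\in\bd K$ at distance $<\rho$. The geometric heart of the argument is the claim that there is a constant $v_\rho>0$ such that every cap $K\cap H^+$ with $\vol(K\cap H^+)<v_\rho$ lies in $\{x\in K:\dist(x,\bd K)<\rho\}$. If such a cap contained a point $y$ with $\dist(y,\bd K)\geq\rho$, then the Euclidean ball $B(y,\rho/2)$ would lie in $K$, and, since $y\in H^+$, at least half of it would lie in $H^+$; choosing $v_\rho$ to be this $\rho$-dependent lower bound on $\vol(K\cap H^+)$ and contraposing yields the claim.

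Set $\phi_{\min}:=\min_K\phi>0$ and $\delta_0:=v_\rho\,\phi_{\min}\leq v_\rho\,\beta$, and fix $\delta\in(0,\delta_0)$. For $K_{\delta/\alpha}\subseteq K_\delta^\phi$, I take any half-space $H^-$ with $\Phi(K\cap H^+)\leq\delta$; then $\vol(K\cap H^+)\leq\delta/\phi_{\min}<v_\rho$, so the cap lies in the $\rho$-tube, $\phi\geq\alpha$ on it, and $\alpha\,\vol(K\cap H^+)\leq\Phi(K\cap H^+)\leq\delta$ gives $\vol(K\cap H^+)\leq\delta/\alpha$, placing $H^-$ among the half-spaces defining $K_{\delta/\alpha}$, whence $K_{\delta/\alpha}\subseteq H^-$. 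For $K_\delta^\phi\subseteq K_{\delta/\beta}$, any $H^-$ with $\vol(K\cap H^+)\leq\delta/\beta<v_\rho$ produces a cap in the $\rho$-tube where $\phi\leq\beta$, so $\Phi(K\cap H^+)\leq\beta\,\vol(K\cap H^+)\leq\delta$, and $H^-$ defines $K_\delta^\phi$. Intersecting over admissible half-spaces in each case finishes the proof; the only real obstacle is the volume-to-thinness implication for caps, which the ball trick above handles cleanly because a point at distance $\geq\rho$ from $\bd K$ carries a full ball of that radius inside $K$, a definite fraction of which must sit in $H^+$.
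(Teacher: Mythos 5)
Your proof is correct, and it takes a genuinely different route from the paper's. The paper works with the parametrization $H(v,t_\delta(v))$ of the extremal cutting hyperplanes and shows, by a compactness-and-contradiction argument resting on the (joint) continuity of $(\delta,v)\mapsto t_\delta(v)$ with $t_0(v)=h_K(v)$, that for small $\delta$ every extremal cap $K\cap H^+(v,t_\delta(v))$ lies inside the sublevel set $\{\phi\leq\beta\}$ (and similarly inside $\{\phi\geq\alpha\}$); the inclusion then follows by comparing the implicitly defined parameters $t(K,\phi,\delta,v)$ and $t(K,1,\delta/\beta,v)$. You instead localize via uniform continuity of $\phi$ on the compact set $K$ (getting a collar of width $\rho$ where $\alpha\leq\phi\leq\beta$) and then prove, by an explicit ball-volume estimate, that any cap of Lebesgue volume below $v_\rho:=\tfrac12\vol\big(B(0,\rho/2)\big)$ must lie inside that collar; after that, both inclusions reduce to the trivial sandwich $\alpha\,\vol\leq\Phi\leq\beta\,\vol$ on the cap and to the observation that the defining families of half-spaces are nested. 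Your argument is more elementary in that it bypasses the continuity of $t_\delta(v)$ (which the paper asserts without proof) and in fact produces an explicit $\delta_0=v_\rho\min_K\phi$; the paper's version meshes more naturally with the $t_\delta(v)$ parametrization that it reuses in the rest of Section 6. One small clarification worth making explicit: when you say ``at least half of $B(y,\rho/2)$ lies in $H^+$,'' the precise reason is that $y\in H^+$, so the half of the ball cut off by the hyperplane through $y$ parallel to $\bd H$ is contained in $H^+$; and $B(y,\rho/2)\subset K$ follows because $\dist(y,\bd K)\geq\rho$ and $y\in\Int K$, so the connected open ball $B(y,\rho)$ cannot meet $\bd K$.
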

\begin{proof}
	Note that by our assumptions $\phi$ is continuous and positive on $\bd K$ and therefore $\min_{\bd K}\phi>0$.
	First we show that there is $\delta_1 = \delta_1(\varepsilon)>0$ such that for all $\delta \in(0,\delta_1)$ and $v\in\S^{n-1}$ we have
	\begin{equation}\label{eqn:proofbound1}
		K\cap H^+\big(v,t_\delta(v)\big) \subseteq \big\{ x\in K : \phi(x)\leq \beta\big\}.
	\end{equation}
	Assume the opposite. Then for all $\delta>0$ there exists $v(\delta)\in\S^{n-1}$ and $y(\delta)\in K$ such that $\phi(y(\delta)) \geq \beta$ and
	$y(\delta)\cdot v(\delta)\geq t_\delta(v(\delta))$.
	By compactness there are converging subsequences with limits $v_0\in\S^{n-1}$ and $y_0\in\bd K$ such that $\phi(y_0)\geq \beta$ and
	$y_0\cdot v_0 \geq t_0(v_0) = h_K(v_0)$.
	Thus $y_0\in\bd K$ and therefore $\phi(y_0) \leq \max_{\bd K} \phi < \beta \leq \phi(y_0)$ -- a contradiction.

	By (\ref{eqn:proofbound1}), we have that
	\begin{align*}
		\delta 
			&= \Phi\Big(K\cap H^+\big(v, t_\delta(v)\big)\Big) 
			\leq \beta \lambda_n\Big(K\cap H^+\big(v,t_\delta(v)\big)\Big),
	\end{align*}
	which yields $t(K,1,\delta/\beta,v)\geq t(K,\phi,\delta,v)$. Thus, by \eqnref{eqn:floating_par} and \eqnref{eqn:floating_par_t}, $K^\phi_\delta\subseteq K_{\delta/\beta}$.
	
	Conversely, there is $\delta_2 = \delta_2(\varepsilon)>0$ such that for all $\delta\in(0,\delta_2)$ and $v\in\S^{n-1}$ we have
	\begin{align*}
		K\cap H^+\big(v,t(K,\phi,\delta,v)\big) \subseteq \big\{x\in K: \phi(x) \geq \alpha\big\}.
	\end{align*}
	Similar to the above we first have
	\begin{align*}
		\delta 
			&= \Phi\Big(K\cap H^+\big(v,t_\delta(v)\big)\Big) 
			\geq \alpha \lambda\Big(K\cap H^+\big(v,t_\delta(v)\big)\Big),
	\end{align*}
	and therefore $K_{\delta/\alpha} \subseteq K^\phi_{\delta}$. Setting $\delta_0 = \min\{\delta_1,\delta_2\}$ concludes the proof.
\end{proof}

For two distinct points $x,y\in\R^n$ the affine segment joining $x$ and $y$ is denoted by $[x,y]$. The previous lemma immediately implies the following result.

\begin{corollary}\label{cor:bound1}
	Let $K\in\cK(\R^n)$, let $\alpha,\beta$ as in \eqref{eqn:par_bound}, and let $z\in\Int\, K$. For $x\in\bd K$ we set 
	\begin{align*}
		\big\{x_{\delta/\alpha}\big\} &= \bd K_{\delta/\alpha} \cap [x,z],&
		\big\{x_{\delta/\beta}\big\}  &= \bd K_{\delta/\beta} \cap [x,z],&
		\big\{x_{\delta}^{\phi}\big\} &= \bd K_{\delta}^\phi \cap [x,z].
	\end{align*}
	Then for $\delta>0$ sufficiently small, we have
	\begin{align*}
		\big\|x_{\delta/\alpha}-z\big\| \leq \big\|x_\delta^\phi-z\big\| \leq \big\|x_{\delta/\beta}-z\big\|.
	\end{align*}
\end{corollary}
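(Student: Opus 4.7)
The plan is to read the sandwich inequality directly off the set inclusions $K_{\delta/\alpha} \subseteq K_\delta^\phi \subseteq K_{\delta/\beta}$ established in the preceding lemma, using the elementary fact that inclusion of convex bodies containing a common interior point translates into an ordering of radial distances from that point. Since the corollary is phrased as an immediate consequence of the lemma, the proposal is essentially a bookkeeping argument, but it should be carried out cleanly.

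First I would check that for $\delta > 0$ sufficiently small the point $z$ lies in the interior of each of the three bodies $K_{\delta/\alpha}$, $K_\delta^\phi$, $K_{\delta/\beta}$. Because $z \in \Int K$ and the classical (Euclidean) floating body $K_\varepsilon$ converges to $K$ in the Hausdorff metric as $\varepsilon \to 0^+$, there is some $\delta_1 > 0$ with $z \in \Int K_{\delta/\alpha}$ whenever $\delta < \delta_1$. Combined with the lemma's inclusions (valid for $\delta < \delta_0$), this gives $z \in \Int K_{\delta/\alpha} \subseteq \Int K_\delta^\phi \subseteq \Int K_{\delta/\beta}$ for all sufficiently small $\delta$. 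In particular, the segment $[x,z]$ from the boundary point $x \in \bd K$ to the interior point $z$ actually crosses each of the three boundaries transversally, so the points $x_{\delta/\alpha}, x_\delta^\phi, x_{\delta/\beta}$ are uniquely defined.

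Second I would invoke the following general observation: if $M_1 \subseteq M_2$ are convex bodies with $z \in \Int M_1$ and $x \notin \Int M_2$, then the segment $[z,x]$ meets $\bd M_i$ at a unique point $x_{M_i} = z + r_i (x-z)/\|x-z\|$ with $r_i > 0$, and $r_1 \leq r_2$. Indeed, the half-open segment $[z, x_{M_1})$ is contained in $\Int M_1 \subseteq \Int M_2$, which forces the boundary crossing of $M_2$ along the ray from $z$ through $x$ to occur no earlier than that of $M_1$. Applying this observation to the pair $(M_1, M_2) = (K_{\delta/\alpha}, K_\delta^\phi)$ and then to $(K_\delta^\phi, K_{\delta/\beta})$ yields the two inequalities $\|x_{\delta/\alpha} - z\| \leq \|x_\delta^\phi - z\| \leq \|x_{\delta/\beta} - z\|$, which is exactly the claim.

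There is no real obstacle here: the only subtlety worth flagging is the verification that $z$ remains in the interior of the smallest of the three bodies, which is where the restriction to sufficiently small $\delta$ enters; everything else is a direct consequence of the convex geometry of nested bodies sharing an interior point.
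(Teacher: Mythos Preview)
Your proposal is correct and matches the paper's approach: the paper states this result without proof, noting only that ``the previous lemma immediately implies the following result,'' and your argument is precisely the natural unpacking of that implication via the radial ordering forced by the nested inclusions $K_{\delta/\alpha} \subseteq K_\delta^\phi \subseteq K_{\delta/\beta}$.
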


To complete the proof, we proceed as follows: The left hand-side of \eqnref{eqn:limit} can be written as an integral over $\bd K$ by Lemma \ref{lem:coneformula}. Theorem \ref{thm:wfb} follows by applying Lebesgue's Dominated Convergence Theorem and calculating the point-wise limit of the integrand. To do so, we need to bound the integrand from above by an integrable function.

We denote by $r_K\colon\bd K \to [0,+\infty)$ the maximal radius of a Euclidean ball that contains $x\in\bd K$ and is contained in $K$. It was proven in \cite{SW:1990}, that for $\alpha>-1$ we have
\begin{align*}
	\int_{\bd K} r_K(x)^{\alpha}\,dx <+\infty.
\end{align*}
Hence $r_K$ is an integrable function and  it was already used as upper bound of the integrand for the Euclidean floating body. The following upper bound for the weighted floating body follows by the Euclidean results obtained in \cite{SW:1990}.

\begin{lemma}\label{lem:asym2}
	Let $K\in\cK(\R^n)$ with  $0\in\Int\, K$. There exists $C>0$ such that for $\delta>0$ sufficiently small 
	\begin{align*}
		\frac{x\cdot n_K(x)}{\delta^{2/(n+1)}\|x\|^n} \int_{\|x_\delta^\phi\|}^{\|x\|} t^{n-1} \psi(tx/\|x\|)\, dt 
			&\leq C\, \big(\max_{K} \psi\big)\, r_K(x)^{-\frac{n-1}{n+1}},
	\end{align*}
	for almost all $x\in \bd K$.
\end{lemma}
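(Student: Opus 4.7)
The plan is to reduce the weighted estimate to the unweighted one from Schütt--Werner \cite{SW:1990} by using Corollary \ref{cor:bound1} to relate $x_\delta^\phi$ to a point on the boundary of a genuine Euclidean floating body, and then estimate $\psi$ pointwise by its maximum on $K$.

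First, since $0\in\Int K$, apply Corollary \ref{cor:bound1} with $z=0$. This gives, for $\delta>0$ sufficiently small, the inclusion $[\|x_{\delta/\alpha}\|,\|x\|]\supseteq [\|x_\delta^\phi\|,\|x\|]$, where $x_{\delta/\alpha}$ is the intersection of $[0,x]$ with the boundary of the classical Euclidean floating body $K_{\delta/\alpha}$. Since $\psi$ is continuous on the compact set $K$, the quantity $M:=\max_K \psi$ is finite, and for every $t\in[0,\|x\|]$ the point $tx/\|x\|$ lies in the segment $[0,x]\subseteq K$, so $\psi(tx/\|x\|)\le M$. Combining these two observations yields
\begin{align*}
\int_{\|x_\delta^\phi\|}^{\|x\|} t^{n-1}\psi(tx/\|x\|)\,dt
 \le M\int_{\|x_{\delta/\alpha}\|}^{\|x\|} t^{n-1}\,dt.
\end{align*}

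Second, the unweighted integrand on the right is precisely the one estimated by Schütt and Werner in the proof of \eqnref{eqn:floatingbody}; namely, there is a constant $C_0>0$ (depending only on $n$ and $K$) such that for almost every $x\in\bd K$ and all sufficiently small $\eta>0$,
\begin{align*}
\frac{x\cdot n_K(x)}{\eta^{2/(n+1)}\|x\|^n}\int_{\|x_\eta\|}^{\|x\|} t^{n-1}\,dt \le C_0\, r_K(x)^{-\frac{n-1}{n+1}}.
\end{align*}
Apply this with $\eta=\delta/\alpha$ and rewrite $\eta^{2/(n+1)}=\alpha^{-2/(n+1)}\delta^{2/(n+1)}$. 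Multiplying through by $\alpha^{-2/(n+1)}$ and combining with the previous display gives the claimed estimate, with $C:=C_0\,\alpha^{-2/(n+1)}$ (which depends only on $n$, $K$, and $\phi$ through $\alpha=\min_{\bd K}\phi-\varepsilon$).

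The only genuine obstacle is making sure the Euclidean bound from \cite{SW:1990} applies to our setting. This is standard: it follows from the classical cap estimate $t^{n-1}(\|x\|-\|x_\eta\|)\le C'\eta^{2/(n+1)}\|x\|^n r_K(x)^{-(n-1)/(n+1)}/(x\cdot n_K(x))$ together with the trivial bound $\int_{\|x_\eta\|}^{\|x\|} t^{n-1}\,dt\le \|x\|^{n-1}(\|x\|-\|x_\eta\|)$; a comparison of $\|x\|-\|x_\eta\|$ with the height of the cap cut off by the hyperplane tangent to $K_\eta$ at $x_\eta$ yields the required factor involving $r_K(x)$. Once this Euclidean input is granted, the weighted case follows at once from the two-line argument above.
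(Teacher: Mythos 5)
Your proof is correct and follows essentially the same route as the paper's: bound $\psi$ by $\max_K\psi$, enlarge the integration interval via Corollary~\ref{cor:bound1} with $z=0$ to replace $x_\delta^\phi$ by the Euclidean floating-body point $x_{\delta/\alpha}$, and then invoke the unweighted estimate from \cite[Lemma 6]{SW:1990} applied at the scaled parameter $\delta/\alpha$, absorbing the resulting $\alpha^{-2/(n+1)}$ factor into the constant $C$. The paper makes the intermediate crude bound $\int_{\|x_{\delta/\alpha}\|}^{\|x\|} t^{n-1}\,dt\le\|x\|^{n-1}\|x-x_{\delta/\alpha}\|$ explicit before citing Sch\"utt--Werner, but this is just what your sketch in the final paragraph says more loosely; there is no substantive difference.
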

\begin{proof}
	Since $0\in\Int K$, by Corollary \ref{cor:bound1} we have $\|x_{\delta}^\phi\| \geq \|x_{\delta/\alpha}\|$. We conclude
	\begin{align*}
		\frac{x\cdot n_K(x)}{\delta^{2/(n+1)}\|x\|^n} \int_{\|x_\delta^\phi\|}^{\|x\|} t^{n-1} \psi(tx/\|x\|)\, dt
			&\leq \big(\max_K \psi\big) \, \frac{x\cdot n_K(x)}{\delta^{2/(n+1)}\|x\|^n}
				\int_{\|x_{\delta/\alpha}\|}^{\|x\|} t^{n-1} \, dt.
	\end{align*}
	Furthermore,
	\begin{align*}
		\frac{x\cdot n_K(x)}{\delta^{2/(n+1)}\|x\|^n} \int_{\|x_{\delta/\alpha}\|}^{\|x\|} t^{n-1} \, dt
			&\leq \, \frac{x\cdot n_K(x)}{\|x\|} \frac{\big\|x-x_{\delta/\alpha}\big\|}{\delta^{2/(n+1)}}
			\leq C r_K(x)^{-\frac{n-1}{n+1}},
	\end{align*}
	where the last inequality is the Euclidean result established in \cite[Lemma 6]{SW:1990}.
\end{proof}

To calculate the point-wise limit of the integrand, we also use the Euclidean result to obtain the result for the weighted floating body. We recall some notions for boundary points of a convex body (see, for example, \cite[Section 2.2, Section 2.5]{Schneider:2014}).

A boundary point $x$ of $K$ is called {\em regular} if there is a unique outer unit normal $n_K(x)$ to $K$ at $x$. Almost all boundary points are regular.
Recall that for a convex body $K$ the boundary $\bd K$ is  $C^2$ almost everywhere in the following sense:
If $x$ is a regular boundary point, there is $\varepsilon>0$ and an open neighborhood $U$ of $x$ such that $U\cap \bd K$ can be described as
\begin{align*}
	U\cap \bd K = \big\{x+v-f(v)\,n_K(x):v\in n_K(x)^\bot \cap \varepsilon \,\B^n\big\},
\end{align*}
where $f\colon n_K(x)^\perp\cap \varepsilon\,\B^n\to \R$ is a convex function which satisfies $f\geq 0$, $f(0)=0$ and $n_K(x)^\perp=\{y\in\R^n: y\cdot n_K(x)=0\}$. A regular boundary point $x\in\bd K$ is \emph{normal} (or second order differentiable), if $f$ is twice differentiable at $0$ in the following sense: $f$ is differentiable at $0$ and there exists a symmetric linear map $A\colon \R^n\to \R^n$ such that for $v,w\in n_K(x)^\perp$,
\begin{align*}
	f(w) = f(v)+ \nabla f(v)\cdot (w-v) + \frac{1}{2} A(w-v)\cdot (w-v) + o\big(\|w-v\|^2\big),
\end{align*}
as $\| w-v\|\to 0$.
Note that almost all boundary points are normal (see \cite[Thm.~2.5.5]{Schneider:2014}), and the (generalized) Gauss--Kronecker curvature $H_{n-1}(K,x)=\det(A)$ exists for normal boundary points.

\begin{lemma}\label{lem:lim2}
	Let $K \in \cK(\R^n)$. If $x\in\bd K$ is a normal boundary point, then
	\begin{align*}
		\lim_{\delta\to 0^+} \frac{x\cdot n_K(x)}{\delta^{2/(n+1)}\|x\|^n} \int_{\|x_\delta^\phi\|}^{\|x\|} t^{n-1} \psi(tx/\|x\|)\, dt
			&= \alpha_n\, H_{n-1}(K,x)^{\frac{1}{n+1}}\, \phi(x)^{-\frac{2}{n+1}}\, \psi(x).
	\end{align*}
\end{lemma}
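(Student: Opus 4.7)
Let $v := n_K(x)$. By continuity of $\psi$ at $x$ and $x_\delta^\phi\to x$ as $\delta\to 0^+$ (a consequence of Corollary \ref{cor:bound1} with $z=0$ together with the Euclidean fact $x_{\delta'}\to x$), the integral factors as
\begin{equation*}
\int_{\|x_\delta^\phi\|}^{\|x\|} t^{n-1}\psi\bigl(tx/\|x\|\bigr)\,dt = \psi(x)\|x\|^{n-1}\bigl(\|x\|-\|x_\delta^\phi\|\bigr)\bigl(1+o(1)\bigr),
\end{equation*}
and the statement reduces to showing
\begin{equation*}
\lim_{\delta\to 0^+}\frac{\|x\|-\|x_\delta^\phi\|}{\delta^{2/(n+1)}} = \alpha_n\, H_{n-1}(K,x)^{1/(n+1)}\,\phi(x)^{-2/(n+1)}\,\frac{\|x\|}{x\cdot n_K(x)},
\end{equation*}
i.e.\ the pointwise Euclidean limit of Sch\"utt--Werner \cite{SW:1990} modulated by the factor $\phi(x)^{-2/(n+1)}$.

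The central step is a \emph{localized} version of the sandwich of Corollary \ref{cor:bound1}, in which the global bounds $\alpha,\beta$ are replaced by the pointwise values $\phi(x)\pm\varepsilon$. Fix $\varepsilon\in(0,\phi(x))$; continuity of $\phi$ gives a neighborhood $V\subset K$ of $x$ on which $\phi\in[\phi(x)-\varepsilon,\phi(x)+\varepsilon]$. I claim that, for all sufficiently small $\delta$,
\begin{equation*}
\bigl\|x_{\delta/(\phi(x)-\varepsilon)}\bigr\|\;\leq\;\|x_\delta^\phi\|\;\leq\;\bigl\|x_{\delta/(\phi(x)+\varepsilon)}\bigr\|,
\end{equation*}
where $x_{\delta'}$ denotes the chord point $[0,x]\cap\bd K_{\delta'}$ for the unweighted Euclidean floating body $K_{\delta'}$. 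For the upper inequality, let $H$ be a supporting hyperplane of $K_{\delta/(\phi(x)+\varepsilon)}$ at $x_{\delta/(\phi(x)+\varepsilon)}$; then its Euclidean cap has volume $\delta/(\phi(x)+\varepsilon)$ and, for $\delta$ small, is contained in $V$, so $\Phi(K\cap H^+)\leq(\phi(x)+\varepsilon)\cdot\delta/(\phi(x)+\varepsilon)=\delta$. Hence $H^-$ is one of the defining half-spaces in \eqnref{eqn:wfb} for $K_\delta^\phi$, which gives $K_\delta^\phi\subseteq H^-$ and therefore $\|x_\delta^\phi\|\leq\|x_{\delta/(\phi(x)+\varepsilon)}\|$. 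The lower inequality is symmetric: the supporting hyperplane of $K_\delta^\phi$ at $x_\delta^\phi$ has $\Phi$-cap $=\delta$ by \eqnref{eqn:floating_par_t} and, for $\delta$ small, lies in $V$, so its Euclidean cap has volume at most $\delta/(\phi(x)-\varepsilon)$; it is then a defining half-space of $K_{\delta/(\phi(x)-\varepsilon)}$, yielding the required containment.

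Combining the local sandwich with the pointwise Euclidean limit of \cite{SW:1990} applied at the rescaled parameter $\delta/(\phi(x)\pm\varepsilon)$ gives, with $A:=\alpha_n H_{n-1}(K,x)^{1/(n+1)}\|x\|/(x\cdot n_K(x))$,
\begin{equation*}
(\phi(x)+\varepsilon)^{-2/(n+1)}\,A \;\leq\; \liminf_{\delta\to 0^+}\frac{\|x\|-\|x_\delta^\phi\|}{\delta^{2/(n+1)}} \;\leq\; \limsup_{\delta\to 0^+}\frac{\|x\|-\|x_\delta^\phi\|}{\delta^{2/(n+1)}} \;\leq\; (\phi(x)-\varepsilon)^{-2/(n+1)}\,A.
\end{equation*}
Letting $\varepsilon\to 0^+$ pinches both sides to the same value, proving the reduced limit; multiplication by $\psi(x)(x\cdot n_K(x))/\|x\|$ produces the claim of the lemma.

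The main obstacle is verifying that the caps appearing in the sandwich argument lie in $V$ for all $\delta$ sufficiently small. For the Euclidean cap at $x_{\delta/(\phi(x)+\varepsilon)}$ this follows from $x_{\delta'}\to x$ together with the classical estimate that a supporting cap of Euclidean volume $\delta'$ at a normal boundary point has diameter $O(\delta'^{1/(n+1)})$, already used in \cite{SW:1990} and implicit in the bound in Lemma \ref{lem:asym2}. For the weighted cap at $x_\delta^\phi$ the same diameter estimate applies, provided one first uses the global sandwich of Corollary \ref{cor:bound1} to localize $x_\delta^\phi$ near $x$ and invokes regularity of $x$ to obtain that the outer normal of $K_\delta^\phi$ at $x_\delta^\phi$ converges to $n_K(x)$ as $\delta\to 0^+$.
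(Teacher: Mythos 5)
Your overall strategy (reduce via continuity of $\psi$, then sandwich $\|x_\delta^\phi\|$ between Euclidean floating-body values at rescaled parameters $\delta/(\phi(x)\pm\varepsilon)$, and let $\varepsilon\to 0$) is the same as the paper's. Your mechanism for achieving the local sandwich is genuinely different, though: the paper replaces $K$ by the local piece $L = K\cap(x+\varepsilon\B^n)$ and invokes the cited lemma from \cite{BW:2016} (which guarantees $[x,0]\cap L^\phi_\delta=[x,0]\cap K^\phi_\delta$ for small $\delta$) so that it can simply apply Corollary~\ref{cor:bound1} to $L$, where $\min_{\bd L}\phi$ and $\max_{\bd L}\phi$ are close to $\phi(x)$; you instead argue directly with supporting hyperplanes of $K_{\delta'}$ and $K_\delta^\phi$ at the corresponding ray points and compare the Euclidean and $\Phi$-volumes of their caps. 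Both implementations are correct in the regime where they are valid, and yours is a reasonable, more self-contained variant.

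However, there is a gap. Your cap-containment step needs the supporting cap to be contained in a prescribed small neighborhood $V$ of $x$ for all small $\delta$, and you justify this with the \lq\lq classical\rq\rq\ diameter estimate $O(\delta'^{1/(n+1)})$. That estimate is only available when $H_{n-1}(K,x)>0$ (more precisely, when $x$ is an exposed point in the normal direction). If $x$ is a normal boundary point lying in the relative interior of a higher-dimensional face of $K$ (then $H_{n-1}(K,x)=0$), the supporting cap of volume $\delta'$ does \emph{not} shrink in diameter as $\delta'\to 0$: its height shrinks, but the cap contains the whole face. Your localization then fails. The lemma is asserted for \emph{all} normal boundary points, so this case must be covered. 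The paper avoids the issue by splitting into the two cases: for $H_{n-1}(K,x)=0$ it uses only the \emph{global} sandwich of Corollary~\ref{cor:bound1} together with Lemma~\ref{lem:lim1} to show the limsup of the left-hand side is $\leq (\max_K\psi)\,\alpha^{-2/(n+1)}\cdot 0 = 0$; the localized argument (and the hypothesis $H_{n-1}(K,x)>0$) is used only in the second case, where the cited \cite{BW:2016} lemma is in force. You should add the same case split: your localized sandwich for $H_{n-1}(K,x)>0$, and the easy global bound (exactly as in the paper) for $H_{n-1}(K,x)=0$. With that addition the proof is complete.

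One minor point worth making explicit: when you say that the supporting hyperplane of $K_{\delta'}$ at $x_{\delta'}$ \lq\lq has Euclidean cap of volume $\delta'$,\rq\rq\ what you actually need and can safely assert from \eqnref{eqn:floating_par}--\eqnref{eqn:floating_par_t} is that the cap has volume \emph{at most} $\delta'$; this suffices for the inequality $\Phi(K\cap H^+)\leq(\phi(x)+\varepsilon)\,\delta'=\delta$, and likewise on the other side.
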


In the proof of Lemma \ref{lem:lim2} we will use the following two results.

\begin{lemma}[\!\! {\cite[Lemma 2.9]{BW:2016}}]
	Let $K\in\cK(\R^n)$ with $0\in\Int\, K$ and $\varepsilon>0$.
	If $x\in\bd K$ is a normal boundary point such that $H_{n-1}(K,x)>0$, then
	 there is $\delta_0=\delta_0(\varepsilon)$ such that for all $\delta\in(0,\delta_0)$, 
	\begin{align*}
		[x,0]\cap L_\delta^\phi = [x,0] \cap K_\delta^\phi,
	\end{align*}
where  $L=K\cap (x+ \varepsilon\, \B^n)$.
	In particular, if we set $\{x_\delta^{\phi,K}\} = \bd K_\delta^\phi\cap [x,0]$ and $\{x_\delta^{\phi,L}\} = \bd L_\delta^\phi\cap [x,0]$,
	then $x^{\phi,K}_\delta=x^{\phi,L}_\delta$.
\end{lemma}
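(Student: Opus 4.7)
The proof is by contradiction, combining compactness with the local second-order structure of $\bd K$ at $x$. First note the trivial inclusion: $L \subseteq K$ implies $L_\delta^\phi \subseteq K_\delta^\phi$, since every half-space involved in the intersection defining $K_\delta^\phi$ is also involved in the one defining $L_\delta^\phi$. Consequently, moving from $x$ inward along $[x,0]$ one hits $\bd K_\delta^\phi$ first, at $x_\delta^{\phi,K}$. To obtain $x_\delta^{\phi,K} = x_\delta^{\phi,L}$ it then suffices to prove $x_\delta^{\phi,K} \in L_\delta^\phi$ for all small $\delta$, because $\Int L_\delta^\phi \subseteq \Int K_\delta^\phi$ combined with $x_\delta^{\phi,K} \in \bd K_\delta^\phi$ forces $x_\delta^{\phi,K} \in \bd L_\delta^\phi$.

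Suppose for contradiction that $x_{\delta_j}^{\phi,K} \notin L_{\delta_j}^\phi$ along some $\delta_j \to 0$. Then there exist hyperplanes $H_j = \{y : y\cdot v_j = c_j\}$ with outer unit normals $v_j$ satisfying $\Phi(L\cap H_j^+)\le\delta_j$ while $x_{\delta_j}^{\phi,K}\cdot v_j > c_j$. Since $x_{\delta_j}^{\phi,K}\in K_{\delta_j}^\phi$, necessarily $\Phi(K\cap H_j^+)>\delta_j$, so $H_j$ cuts off a nonempty subset of $K\setminus L$. Extract a subsequence with $v_j\to v_0\in\S^{n-1}$; Hausdorff convergence $K_\delta^\phi\to K$ gives $x_{\delta_j}^{\phi,K}\to x$. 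The condition $\Phi(L\cap H_j^+)\to 0$ forces $c_j \to h_L(v_j)$ from below, and combined with $c_j < x_{\delta_j}^{\phi,K}\cdot v_j \le h_L(v_j)$ (using $x_{\delta_j}^{\phi,K}\in L$ for large $j$), passage to the limit yields $x\cdot v_0 = h_L(v_0)$. Hence the hyperplane through $x$ with normal $v_0$ supports $L$, and therefore $K$, at $x$; since $x$ is normal, $v_0 = n_K(x)$.

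The final step is to reach a contradiction. Use the local second-order expansion $f(w)=\tfrac12 w^\top A w + o(\|w\|^2)$ of $\bd K$ at $x$ in tangent coordinates, with $A$ positive definite because $H_{n-1}(K,x)>0$. A crucial geometric consequence is that $T_x\cap K=\{x\}$: if $y\in T_x\cap K$ were a second contact point, convexity would give $[x,y]\subseteq K\cap T_x \subseteq \bd K\cap T_x$, contradicting the strict lower bound $f(w)\ge \tfrac12(\lambda_{\min}(A)-o(1))\|w\|^2$, which forces $\bd K$ to peel away from $T_x$ in every tangent direction. Compactness of $K\setminus(x+\varepsilon\B^n)$ then provides $\eta>0$ with $y\cdot n_K(x) \le h_K(n_K(x))-\eta$ on that set. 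As $v_j\to n_K(x)$ and $c_j\to h_K(n_K(x))$, this gap persists for large $j$, whence $K\cap H_j^+ \subseteq x+\varepsilon\B^n$. But then $K\cap H_j^+ = L\cap H_j^+$ and $\Phi(K\cap H_j^+)\le\delta_j$, contradicting $\Phi(K\cap H_j^+)>\delta_j$.

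The main obstacle is passing from the pointwise second-order hypothesis at $x$ to the global one-point contact $T_x\cap K=\{x\}$; without this, a hyperplane close to $T_x$ could cut off a second, far-away piece of $K$ uncontrolled by the local expansion, and the cap could escape $x+\varepsilon\B^n$. The positive-curvature hypothesis $H_{n-1}(K,x)>0$ is precisely what precludes a flat segment through $x$ in $T_x$, and the convexity argument above converts this local statement into the global gap that confines the cap.
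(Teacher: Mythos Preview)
The paper does not give a proof of this lemma; it is quoted from \cite[Lemma~2.9]{BW:2016} and invoked as a black box in the proof of Lemma~\ref{lem:lim2}. Your argument supplies a correct self-contained proof of the essential conclusion $x_\delta^{\phi,K}=x_\delta^{\phi,L}$, via the natural route: use $L_\delta^\phi\subseteq K_\delta^\phi$, assume a separating half-space exists along a sequence $\delta_j\to 0$, extract a limit normal $v_0$, identify $v_0=n_K(x)$, and exploit positive curvature to force the offending caps into $x+\varepsilon\,\B^n$.

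Two small points. First, the inference ``supports $L$, and therefore $K$, at $x$'' warrants one line: if some $y\in K$ satisfied $y\cdot v_0>x\cdot v_0$, then points of $[x,y]$ near $x$ would lie in $L$ and exceed $h_L(v_0)=x\cdot v_0$; hence $v_0$ is an outer normal to $K$ at $x$, and regularity gives $v_0=n_K(x)$. Second, the first displayed assertion $[x,0]\cap L_\delta^\phi=[x,0]\cap K_\delta^\phi$ cannot hold literally when $\varepsilon<\|x\|$, since then $0\in K_\delta^\phi\setminus L$; only the ``in particular'' clause is correct, and that is all that is used downstream. Your proof establishes exactly that clause, so this is a quirk of the lemma's phrasing rather than a gap in your argument.
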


\begin{lemma}[\!\!{\cite{SW:1990}}]\label{lem:lim1}
	Let $K \in\cK(\R^n)$. If $x\in\bd K$ is a normal boundary point, then
	\begin{align*}
		\lim_{\delta\to 0^+} \frac{x\cdot n_K(x)}{\delta^{2/(n+1)}\|x\|^n} \int_{\|x_\delta\|}^{\|x\|} t^{n-1} \, dt
			&= \alpha_n\, H_{n-1}(K,x)^{\frac{1}{n+1}}.
	\end{align*}
\end{lemma}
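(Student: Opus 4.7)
The plan is to first rewrite the expression in a form involving the ``depth'' of $x_\delta$ beneath the tangent hyperplane at $x$, and then prove the limit by a local paraboloid comparison at the normal boundary point.

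First, I would reduce algebraically. Since $\|x_\delta\|\to \|x\|$ as $\delta\to 0^+$,
$$\int_{\|x_\delta\|}^{\|x\|} t^{n-1}\,dt = \frac{\|x\|^n - \|x_\delta\|^n}{n} = \|x\|^{n-1}\bigl(\|x\|-\|x_\delta\|\bigr)\,\bigl(1+o(1)\bigr).$$
Because $x_\delta\in[0,x]$, one has $x_\delta=(\|x_\delta\|/\|x\|)\,x$, and hence $(x-x_\delta)\cdot n_K(x)=((\|x\|-\|x_\delta\|)/\|x\|)\,(x\cdot n_K(x))$. Setting $d_\delta:=(x-x_\delta)\cdot n_K(x)$, the perpendicular distance from $x_\delta$ to the tangent hyperplane to $K$ at $x$, the left-hand side of the claim becomes $d_\delta/\delta^{2/(n+1)}\cdot(1+o(1))$, and it suffices to prove $d_\delta\sim \alpha_n\,H_{n-1}(K,x)^{1/(n+1)}\,\delta^{2/(n+1)}$.

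Next, I would set up the paraboloid model. In local coordinates centered at $x$ with $n_K(x)=e_n$, $\bd K$ admits the representation $z=-f(v)$ with $f(v)=\tfrac12 v^T A v + o(|v|^2)$, where $A$ is positive semi-definite with $\det A = H_{n-1}(K,x)$. For a pure quadratic $f_M(v)=\tfrac12 v^T M v$ with $M$ positive definite, the substitution $v=\sqrt{2h}\,M^{-1/2}u$ gives
$$V_M(h):=\int_{f_M(v)\leq h}\bigl(h-f_M(v)\bigr)\,dv=(\det M)^{-1/2}\,\frac{v_{n-1}\,2^{(n+1)/2}}{n+1}\,h^{(n+1)/2},$$
and solving $V_M(h)=\delta$ yields the explicit paraboloidal cap-depth $h_M(\delta)=\alpha_n\,(\det M)^{1/(n+1)}\,\delta^{2/(n+1)}$.

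For the sandwich step, for each $\eta>0$ (taken smaller than the smallest positive eigenvalue of $A$) I would pick a neighborhood of $x$ small enough that $\tfrac12 v^T(A-\eta I)v\leq f(v)\leq \tfrac12 v^T(A+\eta I)v$ there. By Lemma 2.9 of \cite{BW:2016}, for $\delta$ small the point $x_\delta$ is determined by the local behaviour of $\bd K$ near $x$, so $K$ may be replaced near $x$ by convex bodies $K^\pm_\eta$ modelled on these two paraboloids (patched back to $K$ outside the neighborhood). Since $K^+_\eta\subseteq K\subseteq K^-_\eta$ implies $(K^+_\eta)_\delta\subseteq K_\delta\subseteq(K^-_\eta)_\delta$, the intersection points of the ray $[0,x]$ with these floating-body boundaries satisfy $d_\delta^{K^-_\eta}\leq d_\delta\leq d_\delta^{K^+_\eta}$, and the paraboloidal computation together with the tangential perturbation discussed below gives
$$\alpha_n\,\det(A-\eta I)^{1/(n+1)}\leq \liminf_{\delta\to 0}\frac{d_\delta}{\delta^{2/(n+1)}}\leq \limsup_{\delta\to 0}\frac{d_\delta}{\delta^{2/(n+1)}}\leq \alpha_n\,\det(A+\eta I)^{1/(n+1)}.$$
Letting $\eta\to 0^+$ squeezes both sides to $\alpha_n\,H_{n-1}(K,x)^{1/(n+1)}$. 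The degenerate case $H_{n-1}(K,x)=0$ is handled by the upper bound alone, using only paraboloids with Hessian $\eta I$, whose depth tends to $0$ as $\eta\to 0^+$.

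The main obstacle is justifying the identification $d_\delta^{K^\pm_\eta}\sim h_{A\pm\eta I}(\delta)$: $x_\delta$ is obtained by intersecting $\bd K_\delta$ with the ray $[0,x]$, which is generically not the touching point of $\bd K_\delta$ with the horizontal cap-hyperplane $\{z=-h\}$ in the local coordinates. The tangential offset of $x_\delta$ from this touching point is of order $\sigma\,|\tilde x^\perp|=O(\delta^{2/(n+1)})$, where $\sigma:=1-\|x_\delta\|/\|x\|$ and $\tilde x^\perp$ denotes the tangential part of $x$ in the local frame. However, since $\bd K_\delta$ is itself essentially paraboloidal near its lowest point, such a tangential displacement alters the normal-direction depth only by a quadratic correction of order $O(\delta^{4/(n+1)})=o(\delta^{2/(n+1)})$, which is absorbed in the $(1+o(1))$ factor and does not affect the asymptotic constant.
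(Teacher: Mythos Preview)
The paper does not prove this lemma at all: it is quoted verbatim from Sch\"utt and Werner \cite{SW:1990} and then used as a black box inside the proof of Lemma~\ref{lem:lim2}. Your proposal is therefore not a reproduction of the paper's argument but an independent sketch of the original Sch\"utt--Werner proof. The overall strategy---reduce the radial integral to the normal depth $d_\delta=(x-x_\delta)\cdot n_K(x)$, compute the cap-volume/depth relation exactly for a paraboloid, and sandwich $K$ locally between osculating paraboloids with Hessians $A\pm\eta I$---is exactly the approach of \cite{SW:1990}, and your algebraic reduction and paraboloid computation are correct, including the identification of $\alpha_n$.

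Two points deserve more care before this would pass as a complete proof. First, the ``patching back to $K$ outside the neighborhood'' is not innocuous: you need genuinely \emph{convex} comparison bodies $K_\eta^\pm$ with $K_\eta^+\subseteq K\subseteq K_\eta^-$ globally (so that floating-body monotonicity applies) and with the prescribed second-order behavior at $x$; the localization lemma you cite only lets you replace $K$ by $K\cap(x+\varepsilon\B^n)$, not by an arbitrary local model. In practice one works with the localized body $L=K\cap(x+\varepsilon\B^n)$ and compares cap volumes directly rather than building patched bodies. Second, your last paragraph, arguing that the tangential offset of $x_\delta$ from the touching point contributes only $O(\delta^{4/(n+1)})$ because ``$\partial K_\delta$ is itself essentially paraboloidal,'' is heuristic: you are implicitly assuming a curvature bound on $\partial K_\delta$ near $x$ that you have not established. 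The original argument in \cite{SW:1990} handles this by controlling $\|x-x_\delta\|$ directly via cap-volume estimates for caps whose bounding hyperplane passes through $x_\delta$ (not just horizontal ones), which avoids appealing to the geometry of $\partial K_\delta$.
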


\begin{proof}[Proof of Lemma \ref{lem:lim2}]
	Since $x$ is normal, $H_{n-1}(K,x)$ exists. First, if $H_{n-1}(K,x)=0$, then
	\begin{align*}
		\frac{x\cdot n_K(x)}{\delta^{2/(n+1)}\|x\|^n} \int_{\|x_\delta^\phi\|}^{\|x\|} t^{n-1} \psi(tx/\|x\|)\, dt
			&\leq \big(\max_K \psi\big)\,
				\frac{x\cdot n_K(x)}{\delta^{2/(n+1)}\|x\|^n} \int_{\|x_{\delta/\alpha}\|}^{\|x\|} t^{n-1}\, dt.
	\end{align*}
	By Lemma \ref{lem:lim1}, we conclude
	\begin{align*}
		\limsup_{\delta\to 0^+} \frac{x\cdot n_K(x)}{\delta^{\frac{2}{n+1}}\|x\|^n} \int_{\|x_\delta^\phi\|}^{\|x\|} t^{n-1} \psi\Big(\frac{tx}{\|x\|}\Big)\, dt
			& \leq \frac{\max_K \psi}{\alpha^{\frac{2}{n+1}}} 
				\limsup_{\delta\to 0^+} \frac{x\cdot n_K(x)}{(\delta/\alpha)^{\frac{2}{n+1}}\|x\|^n} 
				\int_{\|x_{\delta/\alpha}\|}^{\|x\|} t^{n-1}\, dt
			=0.
	\end{align*}
	
	Now assume $H_{n-1}(K,x)>0$ and let $\varepsilon>0$ be arbitrary. Set $L = K\cap (x+\varepsilon\, \B^n)$. Then $H_{n-1}(L,x) = H_{n-1}(K,x)$ and $n_K(x)=n_L(x)$. Furthermore, for $\delta$ small enough, we have
	\begin{align*}
		\frac{x\cdot n_K(x)}{\delta^{2/(n+1)}\|x\|^n} \int_{\|x_\delta^{\phi,K}\|}^{\|x\|} t^{n-1} \psi(tx/\|x\|)\, dt
			&= \frac{x\cdot n_L(x)}{\delta^{2/(n+1)}\|x\|^n} \int_{\|x_\delta^{\phi,L}\|}^{\|x\|} t^{n-1} \psi(tx/\|x\|)\, dt.
	\end{align*}
	Let $z\in (\Int L)\cap [0,x]$. We apply Corollary \ref{cor:bound1} on $L$ with $\varepsilon$ and obtain, for $y\in\partial L$, 
	\begin{align*}
		\|y_{\delta/\beta}-z\| \geq \|y_{\delta}^{\phi,L}-z\| \geq \|y_{\delta/\alpha}-z\|,
	\end{align*}
	where $\beta = \max_{\partial L} \phi +\varepsilon $ and $\alpha = \min_{\partial L} \phi - \varepsilon$.
	Since $\|x\| = \|z\|+\|x-z\|$, this yields $\|x_{\delta/\beta}\| \geq \|x_{\delta}^{\phi,L}\| \geq \|x_{\delta/\alpha}\|$. We conclude
	\begin{align*}
		\frac{x\cdot n_L(x)}{\delta^{\frac{2}{n+1}}\|x\|^n} \int_{\|x_\delta^{\phi,L}\|}^{\|x\|} t^{n-1} \psi\Big(\frac{tx}{\|x\|}\Big)\, dt
			&\leq \frac{x\cdot n_L(x)}{\delta^{\frac{2}{n+1}}\|x\|^n} \,
				\Bigg(\max_{t\in \big[\|x_{\delta/\alpha}^L\|,\|x\|\big]} \psi\Big(\frac{tx}{\|x\|}\Big)\Bigg) 
				\int_{\|x_{\delta/\alpha}^L\|}^{\|x\|} t^{n-1} \, dt,
	\end{align*}
	and therefore
	\begin{align*}
		\limsup_{\delta\to 0^+} \frac{x\cdot n_L(x)}{\delta^{2/(n+1)}\|x\|^n} \int_{\|x_\delta^{\phi,L}\|}^{\|x\|} t^{n-1} \psi(tx/\|x\|)\, dt
			&\leq \alpha_n \, H_{n-1}(L,x)^{\frac{1}{n+1}}\, \frac{\psi(x)}{\alpha^{2/(n+1)}}.
	\end{align*}
	Conversely, we have
	\begin{align*}
		\frac{x\cdot n_L(x)}{\delta^{\frac{2}{n+1}}\|x\|^n} \int_{\|x_\delta^{\phi,L}\|}^{\|x\|} t^{n-1} \psi\Big(\frac{tx}{\|x\|}\Big)\, dt
			&\geq \frac{x\cdot n_L(x)}{\delta^{\frac{2}{n+1}}\|x\|^n}\,
				\Bigg(\min_{t\in \big[\|x_{\delta/\beta}^L\|,\|x\|\big]} \psi\Big(\frac{tx}{\|x\|}\Big)\Bigg)
				\int_{\|x_{\delta/\beta}^L\|}^{\|x\|} t^{n-1} \, dt,
	\end{align*}
	and hence
	\begin{align*}
		\liminf_{\delta\to 0^+} \frac{x\cdot n_L(x)}{\delta^{2/(n+1)}\|x\|^n} \int_{\|x_\delta^{\phi,L}\|}^{\|x\|} t^{n-1} \psi(tx/\|x\|)\, dt
			&\geq \alpha_n\, H_{n-1}(L,x)^{\frac{1}{n+1}} \frac{\psi(x)}{\beta^{2/(n+1)}}.
	\end{align*}
	Since $\varepsilon>0$ can be chosen arbitrarily small and $\beta,\alpha \to \phi(x)$ for $\varepsilon\to 0$, we conclude
	\begin{align*}
		\lim_{\delta\to 0^+} \frac{x\cdot n_L(x)}{\delta^{2/(n+1)}\|x\|^n} \int_{\|x_\delta^{\phi,L}\|}^{\|x\|} t^{n-1} \psi(tx/\|x\|)\, dt
			&= \alpha_n\, H_{n-1}(L,x)^{\frac{1}{n+1}}\, \phi(x)^{-\frac{2}{n+1}}\, \psi(x).
	\end{align*}
	This finishes the proof, as, for $\delta>0$ sufficiently small, we have $n_L(x) = n_K(x)$, $H_{n-1}(L,x) = H_{n-1}(K,x)$ and $x_\delta^{\phi,L} = x_\delta^{\phi,K}$.
\end{proof}

The proof of \eqnref{eqn:limit} is now straightforward. By Lemma \ref{lem:coneformula} we have
\begin{align*}
	\frac{\Psi(K)-\Psi(K^\phi_\delta)}{\delta^{\frac{2}{n+1}}} = \int_{\bd K}  \frac{x\cdot n_K(x)}{\delta^{(n+1)/2}\|x\|^n} \int_{\|x_\delta\|}^{\|x\|} t^{n-1} \psi(tx/\|x\|)\, dt\, dx.
\end{align*}
By Corollary \ref{cor:bound1}, there is $\delta_0>0$ such that the integrand is bounded by an integrable function for all $\delta<\delta_0$. By Lebesgue's Dominated Convergence Theorem and Lemma \ref{lem:lim2}, we conclude
\begin{align*}
	\lim_{\delta\to 0^+} \frac{\Psi(K)-\Psi(K^\phi_\delta)}{\delta^{\frac{2}{n+1}}} = \alpha_n \int_{\bd K} H_{n-1}(K,x)^{\frac{1}{n+1}} \phi(x)^{-\frac{2}{n+1}} \psi(x)\, dx.
\end{align*}

\subsection*{Acknowledgments}
The authors thank Juan Carlos Alvarez Paiva and Matthias Reitzner for helpful discussions. 
The work of Monika Ludwig was supported, in part, by Austrian Science Fund (FWF) Project P25515-N25.  Elisabeth Werner was partially supported by NSF grant 1504701.

\goodbreak

\medskip
\parindent=0pt

\bigskip
\begin{samepage}
	Florian Besau\\
	Institut f\"ur Mathematik\\
	Goethe-Universit\"at Frankfurt\\
	Robert-Mayer-Str.~10\\
	60054 Frankfurt, Germany\\
	e-mail: besau@math.uni-frankfurt.de
\end{samepage}

\bigskip
\begin{samepage}
Monika Ludwig\\
Institut f\"ur Diskrete Mathematik und Geometrie\\
Technische Universit\"at Wien\\
Wiedner Hauptstra\ss e 8-10/1046\\
1040 Wien, Austria\\
e-mail: monika.ludwig@tuwien.ac.at
\end{samepage}

\bigskip
\begin{samepage}
Elisabeth M.~Werner\\
Department of Mathematics, Applied Mathematics and Statistics\\
Case Western Reserve University\\
10900 Euclid Avenue\\
Cleveland, Ohio 44106, USA\\
e-mail: elisabeth.werner@case.edu
\end{samepage}
\bigskip

\end{document}